\documentclass{amsart}
\usepackage{amscd,amssymb,amsopn,amsmath,amsthm,graphics,amsfonts,accents,enumerate,verbatim,calc}
\usepackage[dvips]{graphicx}
\usepackage[colorlinks=true,linkcolor=red,citecolor=blue]{hyperref}
\usepackage[all]{xy}
\usepackage{cite}
\usepackage{tikz}
\usepackage{tikz-cd}
\usepackage{mathrsfs}

\addtolength{\textwidth}{2cm}
\calclayout

\newcommand{\rt}{\rightarrow}

\newcommand{\st}{\stackrel}

\newcommand{\al}{\alpha}

\newcommand{\La}{\Lambda}
\newcommand{\Ga}{\Gamma}

\newcommand{\Z}{\mathbb{Z}}

\newcommand{\SA}{\mathscr{A}}

\newcommand{\SC}{\mathscr{C}}
\newcommand{\SE}{\mathscr{E}}

\newcommand{\SG}{\mathscr{G}}

\newcommand{\SX}{\mathscr{X}}

\newcommand{\CC}{\mathcal{C} }

\newcommand{\CG}{\mathcal{G} }
\newcommand{\CH}{\mathcal{H}}

\newcommand{\CQ}{\mathcal{Q} }

\newcommand{\CS}{\mathcal{S} }
\newcommand{\CT}{\mathcal{T} }

\newcommand{\CX}{\mathcal{X} }

\newcommand{\mmod}{{\rm{{mod\mbox{-}}}}}

\newcommand{\prj}{{\rm{prj}\mbox{-}}}

\newcommand{\Gprj}{{\Gp\mbox{-}}}

\newcommand{\ind}{{\rm{ind}}}
\newcommand{\Tr}{{\rm{Tr}}}

\newcommand{\Gp}{{\rm{Gprj}}}

\newcommand{\Ker}{{\rm{Ker}}}

\newcommand{\Hom}{{\rm{Hom}}}
\newcommand{\Ext}{{\rm{Ext}}}

\usepackage[symbol]{footmisc}

\theoremstyle{plain}
\newtheorem{theorem}{Theorem}[section]
\newtheorem{corollary}[theorem]{Corollary}
\newtheorem{lemma}[theorem]{Lemma}

\newtheorem{proposition}[theorem]{Proposition}

\newtheorem{notation}[theorem]{Notation}

\theoremstyle{definition}
\newtheorem{definition}[theorem]{Definition}
\newtheorem{example}[theorem]{Example}

\newtheorem{remark}[theorem]{Remark}
\newtheorem{setup}[theorem]{Setup}

\theoremstyle{plain}

\theoremstyle{definition}

\numberwithin{equation}{section}

\begin{document}
\title[Stable Auslander-Reiten components]{On the stable Auslander-Reiten components of \\ certain monomorphism categories}

\author[Rasool Hafezi]{Rasool Hafezi${}^{\ast}$}\footnotetext{${}^{\ast}$Corresponding author.}
\address{School of Mathematics and Statistics, Nanjing University of Information Science \& Technology, Nanjing, Jiangsu 210044, P.\,R. China}
\email{hafezi@nuist.edu.cn}

\author[Yi Zhang]{Yi Zhang}
\address{School of Mathematics and Statistics, Nanjing University of Information Science \& Technology, Nanjing, Jiangsu 210044, P.\,R. China}
\email{zhangy2016@nuist.edu.cn}

\makeatletter\@namedef{subjclassname@2020}{\textup{2020} Mathematics Subject Classification} \makeatother
\subjclass[2020]{16G20,16E65,16G70, 16G10.}

\keywords{Monomorphism category, almost split sequence, Auslander-Reiten quiver, Gorenstein projective module.}

\begin{abstract}
Let $\Lambda$ be an Artin algebra and let $\rm{Gprj}\mbox{-}\Lambda$ denote the class of all finitely generated Gorenstein projective $\Lambda$-modules. In this paper, we study the components of the stable Auslander-Reiten quiver of a certain subcategory of the monomorphism category $\mathcal{S}({\rm Gprj}\mbox{-}\Lambda)$ containing boundary vertices. We describe the shape of such components. It is shown that certain  components are linked to the orbits of an auto-equivalence on the stable category $\underline{\rm{Gprj}}\mbox{-}\Lambda$.  In particular, for the finite components, we show that under certain mild conditions their cardinalities are divisible by  $3$. We see that this three-periodicity phenomenon reoccurs several times in the paper.
\end{abstract}

\maketitle

\tableofcontents

\allowdisplaybreaks

\section{Introduction}
\s Let $\La$ be an Artin algebra. Almost split sequences, also called Auslander-Reiten sequences, are the building blocks of the Auslander-Reiten theory, AR-theory for short. These non-split short exact sequences contain information about how certain morphisms factor. The theory developed in \cite{AR2, AR3}  provides an efficient tool for studying the local structure of additive categories. The existence of almost split sequences for $\mmod\La$, the category of finitely generated $\La$-modules, was first proved in \cite{AR}.

Because of the importance of the theory, it has been studied in many different settings, such as  subcategories of abelian categories \cite{AS},  triangulated categories \cite{Ha},  exact categories \cite{LNP}, extriangulated categories \cite{INP},  Krull-Schmidt categories \cite{Li}, and  submodule categories \cite{RS}.

The theory became more interesting and became the core of the study in modern representation theory after Ringel embedded the almost split sequences into certain quivers known as Auslander-Reiten quivers \cite{Ri}. The AR-quiver of a category $\SC$, usually denoted by $\Ga(\SC)$ or $\Ga_{\SC}$, is a valued translation quiver \cite[\S IV.4]{ASS} that contains much of the  homological and combinatorial information of the category. Its vertices are the isoclasses $[X]$ of indecomposable objects in $\SC$ and arrows are defined by  using a notion of irreducibility.

The shape of the AR-quiver of an algebra not only visualizes the information of the category $\SC$ but also gives valued information on the category. For instance, one  important class of Artin algebras is Artin algebras of finite representation type. Recall that an algebra $\La$ is  of finite representation type if, up to isomorphism, there are only finitely many indecomposable modules in $\mmod\La$. Their module categories are well described by the AR-quiver, see \cite{Riee}. For instance,  Riedtmann classified all selfinjective algebras of finite representation type by their Auslander-Reiten quivers \cite{Rie, Rie1}. If $\SC$ is a Frobenius exact category, then the stable Auslander-Reiten quiver of $\SC$, denoted by $\Ga^s_{\SC}$, is obtained by removing all projective-injective vertices of $\Ga_{\SC}$.

We point out that the AR-quiver often decomposes into a union of (infinite) components and the structures of such components are studied extensively. For example, in \cite{BE}, tree classes of the components of the stable Auslander-Reiten quiver of a quantum complete intersection were completely described. The shape of the components of the stable Auslander-Reiten quiver that contain Heller lattices was determined in \cite{AKM} and  components of the stable Auslander-Reiten quiver ${\Ga }^{s}_{\SC}$ of a finite group scheme $\CG$ over a field $k$ of characteristic $p$ was studied in \cite{T}.

\s The study of monomorphism categories, also known  as submodule categories, almost has the  same age as AR-theory, or rather older. The study of such categories goes back to Birkhoff \cite{B}. Monomorphism categories allow us to apply methods from homological algebra, combinatorics and geometry to open problems in linear algebra. The monomorphism category of $\La$, denoted by $\CS(\La)$, consists of all monomorphisms in $\mmod\La$ as objects. The morphisms are defined by commutative diagrams. The study of such categories has been the subject of several recent researches, see e.g. \cite{ RS3, RS, RS2, RZ, S, Z}. For the history and details of the theory,  please refer to  \cite{RZ, ZX} and references therein.

In \cite{CH} it is proved  that the monomorphism category of a Frobenius abelian category is a Frobenius exact category. In \cite{RS} it is shown that $\CS(\La)$ has Auslander-Reiten sequences. The authors of \cite{XZZ} generalized results of \cite{RS} from monomorphism categories to the category $\CS_n(\La)$, that is,  the category of all sequences $A_1\rt A_2\rt\cdots \rt A_n$ in $\mmod\La$, such that all morphisms are monomorphisms. Monomorphism categories associated to arbitrary species also had been studied in \cite{GKSP}. In \cite{H}, the first  author of this paper investigated the monomorphism categories associated to a subcategory.

\s Auslander and Bridger \cite{AB} introduced finitely generated modules of Gorenstein dimension zero as a natural generalization of finitely generated projective modules.
This notion was further developed  by several authors and a definition in full generality for arbitrary modules over associative and unitary rings was proposed by Enochs and Jenda \cite{EJ}, by using the terminology of  Gorenstein projective modules.

It has been proved that Gorenstein projective modules are strongly related with monomorphism categories. For instance, Gorenstein projective bimodules over the tensor product of two algebras are studied via monomorphism categories \cite{HLXZ}. Quite recently in \cite{WL}, it is shown that there are recollements induced by the monomorphism categories of Gorenstein projective and Ding projective modules.

\s Let $\CS_{\SG}(\La)$ denote the subcategory of $\CS(\La)$ consisting of all morphisms $f:X\rt Y$ such that all $X, Y$ and ${\rm Cok}(f)$ lie in $\Gprj\La$. This monomorphism subcategory  will be called the {$\CG$}-monomorphism category of $\La$. In this paper we are mainly interested in studying the shape of the components of the stable Auslander-Reiten quiver of $\CS_{\SG}(\La)$. 

The paper is structured as follows. Section \ref{Preliminary} is devoted to prepare the necessary background for our main results. In particular, Frobenius categories,     Auslander-Reiten sequences, Auslander-Reiten quivers and the category of Gorenstein projective modules are studied. In Section \ref{classificationGorProj}, we introduce and study the $\CG$-monomorphism category $\CS_{\SG}(\La)$ of $\La$. In particular, we see that if  $\Gprj\La$ is a contravariantly finite subcategory of $\mmod \La$, then $\CS_{\SG}(\La)$ has almost split sequences. Then we study the structure of almost split sequences starting with/ending at certain objects in $\CS_{\SG}(\La)$. We also investigate the structure of the middle terms of certain almost split sequences in $\CS_{\SG}(\La)$.

In Section \ref{stable componnets} we define an auto-equivalence $\vartheta$ on $\underline{\rm Gprj}\mbox{-}\La$, the stable category of Gorenstein projective modules, and study the $\vartheta$-orbits of indecomposable non-projective Gorenstein projective modules. As a generalization of the boundary modules  \cite[(5.1)]{RS2}, we define the notion of boundary vertices of $\Ga^s_{\CS}$, the stable Auslander-Reiten quiver of $\CS_{\SG}(\La)$, and study those components of $\Ga^s_{\CS}$ that contain a boundary vertex.
In particular, when $\La$ is a CM-finite algebra and $\Ga$ is a component of $\Ga^s_{\CS}$ containing a boundary vertex, the shape of $\Ga$ both in the finite and in the infinite case will be discussed, see Theorem \ref{Proposition 2.7}.
If we consider the boundary vertices of the form $(0 \rt G)$, where $G$ runs over the set of all pairwise non-isomorphic indecomposable non-projective Gorenstein projective modules, then there is a well-defined map between the set of certain $\vartheta$-orbits and the set of all components of the stable Auslander-Reiten quiver $\Ga^s_{\CS}$ containing vertices of the form $(0\rt  G)$, see Proposition \ref{proposition 3.7}.

In Section \ref{Sec 5} we study components of  $\Ga^s_{\CS}$ with finite cardinality containing a boundary vertex. In particular, we show that if $G$ is an indecomposable non-projective Gorenstein projective module such that the component of $\Ga^s_{\CS}$ containing a boundary vertex $(0 \rt G)$ is finite, then this component contains at most two distinct $\tau_{\CS}$-orbits which are different from the $\tau_{\CS}$-orbit of the boundary vertex $(0 \rt G)$, see Theorem \ref{Proposition 2.13}.

In the last section of the paper, we investigate an interesting three-periodicity phenomenon. We show that under some mild conditions, the cardinality of the finite components of the stable Auslander-Reiten quiver $\Ga^s_{\CS}$ containing a boundary vertex is divisible  by 3, see Theorem \ref{Proposition 4.8}.
As it was mentioned by the referee, this kind of three-periodicity phenomenon, for some of the objects, is happening throughout the paper, see Remark \ref{referee}.

\section{Preliminaries}\label{Preliminary}
Throughout this paper, $\La$ will always denote an Artin algebra, and $\mmod \La$ will denote the category of finitely generated right $\La$-modules. By a subcategory we always mean a full subcategory. The subcategory of $\mmod\La$ consisting of all projective modules is denoted by $\prj\La$. An additive subcategory $\SX$ of $\mmod\La$ is called a resolving subcategory if it contains $\prj\La$ and if it is closed under extensions and kernels of epimorphisms.

\s {\sc Functorially finite subcategories.} Let $\SX$ be an additive subcategory of an abelian category $\SA$. Let $A \in \SA$ be an object. A right $\SX$-approximation of $A$ is a morphism $\varphi: X \rt A$, with $X \in \SX$ satisfying the property that any other morphism $\varphi': X' \rt A$ with $X' \in \SX$, factors through $\varphi$. Dually one can define the notion of a left $\SX$-approximation of $A$. We say that $\SX$ is a contravariantly finite subcategory, resp. a covariantly finite subcategory, of $\SA$ if any object $A$ in $\SA$ admits a right, resp. a left, $\SX$-approximation. We say that $\SX$ is a functorially finite subcategory of $\SA$ if it is both a contravariantly finite and a covariantly finite subcategory of $\SA$.

\begin{remark}\label{Rmk: KS}
Let $\SX \subseteq \mmod\La$ be a contravariantly finite subcategory. It is proved in \cite[Corollary 2.6]{KS} that if $\SX$ is  a resolving subcategory additionally, then it is also covariantly finite and hence is a functorially finite subcategory of $\mmod\La$.
\end{remark}

\s {\sc Frobenius categories.}
Let $\SC$ be an additive category. A pair $(i,p)$ of morphisms in $\SC$, depicted by $X \st{i}{\rt} Y \st{p}{\rt} Z$, is called a kernel-cokernel pair if $i$ is a kernel of $p$ and $p$ is a cokernel of $i$. Let $\SE$ be a fixed class of kernel-cokernel pairs in $\SC$ which is closed under isomorphism and satisfies certain axioms \cite[Definition 2.1]{Bu}. Then $(\SC, \SE)$ is called an exact category \cite{Q} and the distinguished kernel-cokernel pairs in $\SE$ are called conflations.  In \cite[Appendix A]{K}, it is shown that exact categories are just full and extension-closed subcategories of abelian categories. For the other  terminologies, we will follow \cite{GR, K}.

Let $(\SC, \SE)$ be an exact category. An object $P \in \SC$ is called a projective object relative to $\SE$, or simply an $\SE$-projective object, if the functor $\Hom_{\SC}(P, - )$ from $\SC$ to $\SA b$, the category of abelian groups, is exact. We say that $\SC$ has enough projective objects if for every object $X \in \SC$ there exists an admissible epimorphism $P \rt X$ such that $P$ is $\SE$-projective. The notions of $\SE$-injective objects and a category having enough $\SE$-injective objects are defined dually.

An exact category $(\SC, \SE)$ is called Frobenius if it has enough $\SE$-projective and enough $\SE$-injective objects and the classes of $\SE$-projective and $\SE$-injective objects coincide \cite[\S 13.4]{Bu}. Happel \cite[Theorem 2.6]{Ha} observed that the stable category $\underline{\SC}$ of a Frobenius category is a triangulated category. Recall that $\underline{\SC}$ is the category consisting of the same objects as $\SC$. For two objects $\underline{X}, \underline{Y} \in \underline{\SC}$ the morphism set $\Hom_{\underline{\SC}}(\underline{X}, \underline{Y})$ is defined to be $\Hom_{\SC}(X,Y)$ modulo the subgroup of $\Hom_{\SC}(X,Y)$ consisting of all morphisms that factors through an $\SE$-projective object. The stable Hom set is also denoted by $\underline{\Hom}$.

\s {\sc Auslander-Reiten sequences.}
Let $\SC$ be an additive category. A morphism $p: Y \rt Z$ in $\SC$ is said to be right minimal if every morphism $h: Y \rt Y$ with $ph=p$ is an isomorphism.  A morphism $p: Y \rt Z$ in $\SC$ is called a right almost split morphism if it is not a retraction and each morphism $g : W \rt Z$ which is not a retraction factors through $p$. A right minimal almost split morphism is a morphism which is right minimal and right almost split in $\SC$. Dually, left minimal, left almost split and left minimal almost split morphisms in $\SC$ are defined.

Let $(\SC, \SE)$ be an exact category. A conflation $\eta: 0 \rt X \st{i}{\rt} Y \st{p}{\rt} Z \rt 0$ in $\SE$ is called an almost split conflation, or almost split sequence, if $i$ is a left minimal almost split morphism and $p$ is a right minimal almost split morphism. In this case, $\eta$ is called an almost split sequence starting with $X$ or an almost split sequence ending with $Z$. Throughout, we call $X$, $Y$ and $Z$, the left, middle and right term of $\eta$, respectively.

We say that $\SC$ has almost split sequences if each non-projective indecomposable object appears as the right term of an almost split sequence, and  each non-injective indecomposable object appears as the left term of an almost split sequence.  In addition, each projective indecomposable object should appear as the right term of a right minimal almost split morphism and each injective indecomposable object should appear as the left term of a left minimal almost split morphism.

\begin{remark}
Let $\SC=\mmod\La$ and let $\tau_{\La} = D\Tr$ be the Auslander-Reiten translation, where $D$ is the standard duality functor and $\Tr$ stands for the transpose. It is known that the end terms of an almost split sequence are related to $\tau_{\La}$ and $\tau_{\La}^{-1}=\Tr D$, see e.g.  \cite[\S IV.2]{ASS}.

In view of this property, over an exact category $(\SC, \SE)$ that has almost split sequences, we define $\tau_{\SC}$ by saying that $\tau_{\SC} Z = X$ if and only if there exists an almost split sequence $0 \rt X \st{i}\rt Y \st{p}\rt Z \rt 0$ in $\SE$.
\end{remark}

\begin{remark}\label{AS}
One of the old problems in representation theory of algebras is to determine which categories admit almost split sequences. One of the important results in this direction is due to Auslander and Smal{\o}  \cite[Theorem 2.4]{AS}, showing that if $\SC$ is an extension closed and functorially finite subcategory of $\mmod\La$, then $\SC$ has almost split sequences. Note that $\SC$, as an extension closed full subcategory of the abelian category $\mmod\La$, is an exact category.
\end{remark}

\s {\sc Auslander-Reiten quivers.}
Recall that a quiver $\Delta$ is a quadruple $(\Delta_0, \Delta_1, s, t)$, where $\Delta_0$, resp. $\Delta_1$, is the set of vertices, resp. the set of arrows, of $\Delta$, and $s, t: \Delta_1 \rt \Delta_0$ are two maps, called the source and target maps, which associate each arrow $\alpha$ to its source $s(\alpha)$ and its target $t(\alpha)$, respectively. Sometimes, when maps $s$ and $t$ are clear from the context, a quiver will be denoted just by the sets of vertices and arrows.

Let $\SC$ be an exact category that has almost split sequences. The Auslander-Reiten quiver of $\SC$ is a quiver recording some important information of $\SC$ in the form of a quiver. It is a valued translation quiver \cite[\S IV.4]{ASS} whose vertices are modules in $\ind\SC$, the subcategory of isomorphism classes of indecomposable objects in $\SC$. For two vertices $[X]$ and $[Y]$ in $\ind\SC$, the arrows $[X] \rt [Y]$ are defined by using the notion of irreducible morphisms. These valued arrows of $\Ga_{\SC}$ describe the minimal left almost split morphisms with indecomposable domains and the  minimal right almost split morphisms with indecomposable codomains. The translation is given by the Auslander-Reiten translations $\tau_{\SC}$ and $\tau^{-1}_{\SC}$. The connected components of $\Ga_{\SC}$ are called Auslander-Reiten components of $\CC$. Throughout this paper, the Auslander-Reiten quiver of $\SC$ will be denoted by $\Ga_{\SC}$.

If $[X]$ is a vertex corresponding to an indecomposable projective object $X$, resp. indecomposable injective object $X$, then $[X]$ is called a projective vertex, respectively an injective vertex, of $\Ga_{\SC}$. If $\SC$ is a Frobenius exact category, then the stable Auslander-Reiten quiver of $\SC$ is denoted by $\Ga^s_{\SC}$. It is obtained by removing all projective-injective vertices  of  $\Ga_{\SC}$ and hence removing all arrows attached to those vertices.

If $\SC=\mmod\La$, then $\Ga_{\mmod\La}$, resp. $\Ga^s_{\mmod\La}$, will be denoted by $\Ga_{\La}$, resp. $\Ga^s_{\La}$.

We omit the valuations of the arrows in the Auslander-Reiten quivers if they are not essential. We often identify the isomorphism class of an indecomposable object with its representative, or with the corresponding vertex in the  Auslander-Reiten quiver.

For background on the AR-theory of Artin algebras the reader is referred to one of the books \cite{ARS, ASS} or \cite{SY}. For details of the AR-theory of Krull-Schmidt categories we refer the reader to \cite{Li}.

\s {\sc Gorenstein projective modules.}
Let $\La$ be an Artin algebra. A $\La$-module $G$ is called Gorenstein projective if there exists an exact sequence
\[P^\bullet:\cdots\rightarrow P^{-1}\xrightarrow{d^{-1}} P^0\xrightarrow{d^0}P^1\rightarrow \cdots\]
of finitely generated projective modules which is $\Hom_{\La}( - , \La)$-exact, and $G \cong \Ker d^0$.  The subcategory of $\mmod\La$ consisting of all Gorenstein projective modules will be denoted by $\Gprj\La$.  In the following remark, we list some of the basic properties of the category $\Gprj\La$. The proof of these facts could be found  in \cite{Be, CH2}. Let $\Gprj\La^{\perp}$ be the subcategory
\[\{M \in \mmod\La \mid \Ext^1_{\La}(G, M) = 0 \  {\rm for \ any } \ G \in \Gprj\La \}\]
 of $\mmod \La$.

\begin{remark}\label{Prop. Gprj}
Let $\La$ be an Artin algebra and let
$\Gprj\La$ be the category of finitely generated Gorenstein projective $\La$-modules.
\begin{enumerate}
  \item $\Ext^1_{\La}(G, P)=0$, for every Gorenstein projective module $G$ and every projective module $P$.
  \item $\Gprj\La \cap \Gprj\La^{\perp} = \prj\La$.
  \item $\Gprj\La$ is a resolving subcategory of $\mmod\La$.
  \item $\Gprj\La$, as the full subcategory of $\mmod\La$, is closed under extensions. So it inherits a canonical exact structure from $\mmod\La$. Its conflations are all short exact sequences in $\mmod\La$ with all terms in $\Gprj\La$.
  \item  $\Gprj\La$ is a Frobenius exact category: its class of (relative) projective objects is exactly the class of projective modules in $\mmod\La$. This class also plays the role of relative injectives of the exact category $\Gprj\La$, see e.g. \cite[Proposition 2.1.11]{CH2}.
  \item By Remark \ref{AS}, $\Gprj\La$ has almost split sequences if it is a functorially finite subcategory of $\mmod\La$. This is the case, for instance, when $\La$ is a Gorenstein algebra, i.e. the injective dimension of $\La$ both as a left and as a right $\La$-module is finite \cite[Corollary 2.3.6]{CH2}. In this case, the Auslander-Reiten translation of $M$ in $\Gprj\La$ will be denoted by $\tau_{\SG}$.
  \item  An Artin algebra $\La$ is called selfinjective if $\La$ is injective as a $\La$-module, or equivalently, the projective and injective modules in $\mmod\La$ coincide. Obviously, when $\La$ is a selfinjective algebra, all modules are Gorenstein projective, that is, $\Gprj\La = \mmod \La$.
  \item  An Artin algebra $\La$ is said to be of finite Cohen-Macaulay type, CM-finite for short, if $\Gprj\La$ is of finite type, that is, up to isomorphism, there are only finitely many indecomposable modules in $\Gprj\La$, see \cite{Be}. It is clear that if $\La$ is CM-finite, then $\Gprj\La \subseteq \mmod\La$ is functorially finite.
\end{enumerate}
\end{remark}

Let $M \in \mmod\La$. Consider the short exact sequence
\begin{equation}\label{Syzygy}
0 \rt \Omega_{\La}(M)\st{i_M} \rt P_M \st{p_M} \rt M \rt 0
\end{equation}
in $\mmod \La$, where $P_M$ is the projective cover of $M$. The module $\Omega_{\La}M$ is called the first syzygy of $M$. Inductively we can define $\Omega^n_{\La}M$, the $n$-th syzygy of $M$, where $n \geq 2$ is an integer.

If we assume further that $M$ is a Gorenstein projective module, then we can define inductively $\Omega^n_{\La}(M)$, for negative integers, using the notion of minimal left $\prj\La$-approximation. We need the following property of the syzygy modules.

\begin{lemma}
Let $M$ be an indecomposable non-projective Gorenstein projective module. Then
\[\Omega_{\La}^{-1}\Omega_{\La}M \simeq M \simeq \Omega_{\La}\Omega^{-1}_{\La}M.\]
\end{lemma}

\begin{proof}
By Remark \ref{Prop. Gprj}, $\Ext^1_{\La}(G, P)=0$ for every Gorenstein projective module $G$ and every projective module $P$. So we can deduce that the morphism $i_m$ in the short exact sequence \eqref{Syzygy} is a left $\prj\La$-approximation. It is minimal because $M$ is indecomposable. So we obtain the first isomorphism. The second isomorphism is proved dually.
\end{proof}

Note that when $\La$ is a selfinjective algebra, $\Omega^n_{\La}(M)$, with $n$ a negative integer, is the same as the $n$-th cosyzygy of $M$ in the usual sense, which is defined by using injective envelopes.

\section{Relative monomorphism categories}\label{classificationGorProj}
Let $\La$ be an Artin algebra. The morphism category  $\CH(\La)$ of $\mmod\La$, is a category whose objects are morphisms $f: X \rt Y$ in $\mmod\La$, denoted by $(X\st{f}\rt Y)$ as an object in ${\CH}(\La)$. A morphism $\al: (X\st{f}\rt Y) \rt (X'\st{f'}\rt Y')$ in $\CH(\La)$, is a pair of morphisms $(\al_1 \ \al_2)$, where $\al_1: X \rt X'$ and $\al_2: Y \rt Y'$ are morphisms in $\mmod\La$ and $\al_2f=f'\al_1$. It is easy to see that $\CH(\La)$ is an abelian category. Moreover, by \cite[Theorem III.1.5]{ARS}, it is equivalent to the module category $\mmod T_{2}(\La)$, where $T_{2}(\La)$ is the lower triangular $2 \times 2$ matrix algebra over $\La$.

\s Let $\SX$ be a resolving subcategory of $\mmod\La$. The relative monomorphism category of $\La$ with respect to $\CX$, denoted by $\CS_{\SX}(\La)$, is defined to be the subcategory of $\CH(\La)$ consisting of all monomorphisms $f: X \rt Y$ in $\CH(\La)$ such that all $X, Y$ and the cokernel of $f$, ${\rm Cok}(f)$, lie in $\SX$. If we assume that $\SX=\mmod\La$, then the category $\CS_{\mmod\La}(\La)$ is just $\CS(\La)$, the submodule category of $\CH(\La)$ that consists of all monomorphisms in $\mmod\La.$ It  is also known as the monomorphism category of $\La$.

Of particular interest for us is when $\SX=\Gprj\La$.

\begin{definition}
Let $\La$ be an Artin algebra. The relative monomorphism category of $\La$ with respect to $\Gprj\La$ is the subcategory of $\CH(\La)$ consisting of all monomorphisms $f: X \rt Y$  such that all $X, Y$ and ${\rm Cok}(f)$ lie in $\Gprj\La$. According to our notation $\CS_{\Gprj\La}(\La)$ denotes the relative monomorphism category of $\La$ with respect to $\Gprj\La$. For ease of notation, throughout the paper instead of  $\CS_{\Gprj\La}(\La)$ we denote this category by $\CS_{\SG}(\La)$ and call it the $\CG$-monomorphism category of $\La$.
\end{definition}

The following remark collects some of the basic properties of this category.

\begin{remark}\label{SGLa}
Let $\La$ be an Artin algebra.
\begin{enumerate}
   \item The equivalence $\CH(\La) \simeq \mmod T_2(\La)$ in view of \cite[Dual of Theorem 3.5.1]{EHS} or \cite[Theorem 5.1]{LuZ}, implies that $\CS_{\SG}(\La)$ is equivalent to the subcategory of $\mmod T_{2}(\La)$ consisting of all Gorenstein projective $T_{2}(\La)$-modules, that is $\CS_{\SG}(\La) \simeq \Gprj T_{2}(\La)$. In particular, it inherits a Frobenius exact structure from $\Gprj T_{2}(\La)$.
  \item By \cite[Theorem 2.5]{RS}, $\CS(\La)$ as a subcategory of $\CH(\La)$ is functorially finite and hence has Auslander-Reiten sequences. If $\La$ is a selfinjective algebra, then $\CS_{\SG}(\La)$ is exactly $\CS(\La)$. Thus $\CS_{\SG}(\La)$ has almost split sequences over selfinjective algebras.

  \item Let $\Gprj\La$ be a contravariantly finite subcategory of $\mmod\La$.  It is proved in \cite[Proposition 3.1]{H2} that  $\CS_{\SG}(\La)$ is a functorially finite subcategory of $\CH(\La)$. To see this, note that by Item (1) of this remark, $\CS_{\SG}(\La)$ is in fact the subcategory of Gorenstein projective modules in $\CH(\La)$. Hence in view of Remark \ref{Prop. Gprj}(3) it is a resolving subcategory. So by Remark \ref{Rmk: KS}, in order to show that $\CS_{\SG}(\La)$  is a functorially finite subcategory of $\CH(\La)$, it is sufficient to show that it is a contravariantly finite subcategory of $\CH(\La)$. On the other hand, as it is mentioned in Item (2), $\CS(\La)$ is a functorially finite subcategory of  $\CH(\La)$. So in order to show that $\CS_{\SG}(\La)$ is a contravariantly finite subcategory of $\CH(\La)$, it is sufficient to show that every object in $\CS(\La)$ has a right $\CS_{\SG}(\La)$-approximation. This is done explicitly in the proof of \cite[Proposition 3.1]{H2} using facts from basic homological algebra such as Wakamutsu’s Lemma and pull-back diagrams. Therefore, if $\La$ is a Gorenstein algebra or a CM-finite algebra, then $\CS_{\SG}(\La)$ has almost split sequences.
\end{enumerate}
\end{remark}

The following lemma determines the structure of certain almost split sequences in $\CS_{\SG}(\La)$. This lemma   will be referred several times throughout the paper.

\begin{lemma}(\cite[Lemma 6.3]{H})\label{Lemma-ASTM}
Let  $\delta: 0 \rt  A\st{f} \rt B\st{g} \rt C \rt 0$ be an almost split sequence in $\Gprj\La.$
\begin{itemize}
\item[$(1)$] The almost split sequence in $\CS_{\SG}(\La)$ ending at $(0\rt C)$ has the form
\begin{equation}\label{3.2(1)}
\xymatrix@1{  0\ar[r] &
		(A\st{1}\rt A)
			\ar[r]^{( 1 ~~ f)}&
			  (A \st{f}\rt  B) \ar[r]^{( 0 ~~ g )}&
			(0\rt C)\ar[r]& 0. }
\end{equation}	
	
\item [$(2)$] Let $e:A \rt I$ be the minimal left ${\rm prj}\mbox{-}\La$-approximation of $A$. Then the almost split sequence in $\CS_{\SG}(\La)$ ending at $(C\st{1}\rt C)$ has the form
\begin{equation}\label{3.2(2)}
\xymatrix@1{  0\ar[r] &
		(A\st{e}\rt I)
			\ar[rr]^-{( f ~~ \left[\begin{smallmatrix} 1 \\ 0\end{smallmatrix}\right])}
			& & (B \st{h}\rt  I\oplus C) \ar[rr]^-{( g ~~ \left[\begin{smallmatrix} 0 & 1\end{smallmatrix}\right])}& &
			(C\st{1}\rt C)\ar[r]& 0, } \
\end{equation}	
where $e':B \rt I$  is an extension of $e$ and $h$ is the map $\left[\begin{smallmatrix} e' \\ g\end{smallmatrix}\right]$, that is, $h$ is induced by the following push-out diagram
		$$\xymatrix{		 A \ar[d]^e \ar[r]^{f} & B \ar[d]^h
			\ar[r]^{g} & C \ar@{=}[d]  \\
			I \ar[r]^{\left[\begin{smallmatrix} 1 \\ 0\end{smallmatrix}\right]} &I\oplus C\ar[r]^{\left[\begin{smallmatrix} 0 & 1\end{smallmatrix}\right]}
			& C   }	$$
		
\item [$(3)$] Let $b:P\rt C$ be the projective cover of $C$. Then the almost split sequence in $\CS_{\SG}(\La)$ starting  with $(0 \rt A)$ has  the form
\begin{equation}\label{3.2(3)}
\xymatrix@1{  0\ar[r] & (0\rt A)
			\ar[rr]^-{( 0 ~~ \left[\begin{smallmatrix} 1 \\ 0\end{smallmatrix}\right])}
			& & {(\Omega_{\La}(C)\st{h}\rt A\oplus P)}\ar[rr]^-{( 1 ~~ \left[\begin{smallmatrix} 0 & 1\end{smallmatrix}\right])}& &
			{( \Omega_{\La}(C)\st{i}\rt  P)}\ar[r]& 0,}
\end{equation}	
where $b'$ is a lifting of $b$ to $g$ and $h$ is the kernel of the morphism $[f~~b']:A\oplus P\rt B$, that is, $h$ is induced by the following pull-back diagram

$$\xymatrix{		  & \Omega_{\La}(C) \ar[d]^h
			\ar@{=}[r] & \Omega_{\La}(C) \ar[d]^i  \\
			A\ar@{=}[d] \ar[r]^{\left[\begin{smallmatrix} 1 \\ 0\end{smallmatrix}\right]} & A\oplus P
			\ar[d]^{\left[\begin{smallmatrix} f & b'\end{smallmatrix}\right]} \ar[r]^{\left[\begin{smallmatrix} 0 & 1\end{smallmatrix}\right]} & P \ar[d]^b  \\
			A  \ar[r]^{f} & B
			\ar[r]^{g} & C }	$$
\end{itemize}
\end{lemma}

\begin{remark}\label{referee}
As it was mentioned by the referee, using Lemma \ref{Lemma-ASTM}, one can see that three applications of the Auslander-Reiten translation to an object of type $(0 \rt A)$ yields again an object of the same type. This kind of three-periodicity phenomenon, for some of the objects, is happening throughout the paper: for example in Proposition \ref{Proposition 2.5} and  Theorem \ref{Proposition 4.8}.
\end{remark}

Our aim, toward the end of the section, is to study the structure of the middle term of the almost split sequences appearing in Lemma \ref{Lemma-ASTM}.

\begin{setup}
From now on, we assume that $\Gprj\La$ is a contravariantly finite subcategory of $\mmod \La$.  Since $\Gprj\La$ is a resolving subcategory, by Remark \ref{Rmk: KS}, it is a functorially finite subcategory of $\mmod\La$. Hence it has almost split sequences.
\end{setup}

\begin{lemma}\label{Lemma 2.1}
Let $P$ be an indecomposable projective module in $\mmod \La$ with the radical ${\rm rad}P$. Let $i:{\rm rad}P\rt P$ be the canonical inclusion.
\begin{itemize}		
\item [$(1)$] The objects $(0 \rt P)$ and $(P\st{1}\rt P)$   are indecomposable projective-injective objects  in $\CS_{\SG}(\La)$. Furthermore, each indecomposable projective-injective object arises in this way.
\item [$(2)$] The composition map \[(0~~i)\circ(0~~f):(0\rt  G)\rt (0\rt {\rm rad}P)\rt (0\rt P) \]	is  a right minimal almost split morphism in $\CS_{\SG}(\La)$, where $f:G\rt {\rm rad}P$ is the  minimal right $\Gprj\La$-approximation of ${\rm rad}P$.
\item [$(3)$]	The composition map \[(i~~1)\circ (\phi_1~~\phi_2):(G'\st{g}\rt G)\rt ({\rm rad}P\st{i}\rt P)\rt (P\st{1}\rt P) \] is a right minimal almost split morphism in $\CS_{\SG}(\La)$, where $(\phi_1~~\phi_2):(G'\st{g}\rt G)\rt ({\rm rad}P\st{i}\rt P)$ is the minimal right $\CS_{\SG}(\La)$-approximation of $({\rm rad}P\st{i}\rt P)$. Moreover, $G$ belongs to ${\rm prj}\mbox{-}\La.$
\end{itemize}	
\end{lemma}

\begin{proof}
$(1)$  It follows from \cite[Proposition 1]{HM}.
	
$(2)$	Set $K:=\text{Ker}(f)$. By  Wakamutsu's Lemma \cite[Lemma 2.1.1]{X}, $K \in \text{Gprj}\mbox{-}\La^{\perp}$.  For every $(G\st{e}\rt G')$ in $\CS_{\SG}(\La)$, by applying $\Ext^1_{\CH(\La)}(-, (0\rt K) )$ to the short exact sequence
\[0 \rt (G\st{g}\rt {\rm Im}(e))\rt (G\st{e}\rt G')\rt (0\rt \text{Cok}(e))\rt 0,\]
where $g$ is an isomorphism, we obtain that $\Ext^1_{\CH(\La)}((G\st{e}\rt  G'), (0\rt  K) )=0$. Hence $(0\rt K) \in \CS_{\SG}(\La)^{\perp}$. This implies that $(0~~f) :(0\rt G) \rt (	0\rt {\rm rad}P)$ is a right $\CS_{\SG}(\La)$-approximation. Since $f$ is right minimal in $\mmod \La$ we may deduce that $(0~~f)$ is right minimal in $\CS_{\SG}(\La)$. Using this fact, we show that $(0~~if)$ is a right minimal almost split morphism. If $(0~~if)\circ (0~~h)=(0~~if)$, then $hf=f$. Since $f$ is right minimal, $h$ is an isomorphism. Hence $(0~~h)$ is an isomorphism. Thus $(0~~if)$ is right minimal. To complete the proof, we show that it is right almost split. Assume that $(0~~g):(X\st{v}\rt Y)\rt (0\rt P)$ is not a retraction. By \cite[Lemma 3.1]{RS}, the inclusion $(0~~i)$ is a right minimal almost split morphism in $\CS(\La)$. So there exists $(0~~l):(	X\st{v}\rt Y)\rt (0\rt {\rm rad}P)$ such that $(0~~g)=(0~~i)\circ(0~~l)$. Since $(0~~f)$ is a right $\CS_{\SG}(\La)$-approximation, there is a morphism $(0~~d):(X\st{v}\rt Y)\rt (	0\rt G)$ such that  $(0~~l)=(0~~f)\circ (0~~d)$. Hence $(0~~g)$ factors through $(0~~if)$ via $(0~~d)$, as desired.
	
$(3)$ The proof of the first part is similar to the proof of $(2)$. Just note that here we use the fact that $(	{\rm rad}P\st{i}\rt P)\rt (P\st{1}\rt P)$ is a right minimal almost split morphism in $\CS(\La)$ by \cite[Lemma 3.1]{RS}. To prove the second part, let $(K_1\st{d}\rt K_2)$ be the kernel of $(\phi_1~~\phi_2)$. By Wakamutsu's Lemma, $\Ext^1_{\CH(\La)}((X\st{t}\rt Y), (K_1\st{d}\rt K_2))=0$ for any $(X\st{t}\rt Y)$ in $\CS_{\SG}(\La)$. Therefore for any $G \in \text{Gprj}\mbox{-}\La$,
\[\Ext^1_{\CH(\La)}((0\rt G), (K_1\st{d}\rt K_2))=\Ext^1_{\La}(G, K_2)=0,\]
\[\Ext^1_{\CH(\La)}((G\st{1}\rt G), (K_1\st{d}\rt K_2))=\Ext^1_{\La}(G, K_1)=0.\]
Hence we deduce $K_1, K_2 \in \Gprj\La^{\perp}.$ Consider the following short exact sequence in $\CS_{\SG}(\La)$	
\[0 \rt (K_1\st{d}\rt K_2)\rt (G'\st{g}\rt G)\rt ({\rm rad}P\st{i}\rt P)\rt 0.\]
The above sequence gives the short exact sequence $0 \rt K_2\rt G\rt P\rt 0$ in $\mmod \La$, which must split. Hence $K_2 \in \Gprj\La\cap \Gprj\La^{\perp}$. So by Remark \ref{Prop. Gprj}, $G \in \prj\La$. This completes the proof.	
\end{proof}

\begin{lemma}\label{Lemma 2.5}
Let $P$ be a non-zero indecomposable projective module in $\mmod\La.$
\begin{itemize}
\item [$(1)$] Let $( 0 \rt P)$ be a direct summand of the middle term of  an almost split sequence in $\CS_{\SG}(\La)$ starting with $C$. Then $C=(0 \rt G)$, where $G$ is an indecomposable non-projective module in $\Gprj\La.$
\item [$(2)$] Let $( P \st{1}\rt P)$ be a direct summand of the middle term of an almost split sequence  in $\CS_{\SG}(\La)$ starting with $D$. Then $D=( \Omega_{\La}(Q) \st{i}\rt P_Q)$, where $Q$ is  an indecomposable non-projective module in $\Gprj\La.$
\end{itemize}
\end{lemma}

\begin{proof}
By applying Lemma \ref{Lemma 2.1}, we get that a right minimal almost split morphism in $\CS_{\SG}(\La)$ with the codomain $(0 \rt P)$ is of the form $(0 \rt G) \rt ( 0 \rt P)$. This means that the domain of an  irreducible morphism with the codomain $( 0 \rt P)$ is a direct summand of $( 0 \rt G)$.  As a result,  we get the desired form for $C$ in the first statement. The proof of $(2)$ is similar.	
\end{proof}

\begin{proposition}\label{Lemma 2.3}
Consider the same notations as in  Lemma \ref{Lemma-ASTM}(2). If the middle term $(B\st{h}\rt I\oplus C)$ is not indecomposable, then there is  a decomposition
\[(B\st{h}\rt I\oplus C)=(B'\st{h'}\rt C)\oplus (I\st{1}\rt I ),\]
where $(B'\st{h'}\rt C)$ is an indecomposable non-projective object.	
\end{proposition}

\begin{proof}
Assume that the middle term of the sequence \eqref{3.2(2)} is decomposable. We know from the sequence $\delta$ that $C$ is non-projective.
Let $\delta': 0 \rt C\st{f'}\rt B''\st{g'}\rt C'\rt 0$ be the almost split sequence in $\rm{Gprj}\mbox{-} \La$ starting  at $C$. By applying Lemma \ref{Lemma-ASTM}(1, 2) to the almost split sequences $\delta'$ and $\delta$, we get the following almost split sequences in $\CS_{\SG}(\La)$
\[\xymatrix@1{  0\ar[r] & {( C\st{1}\rt C)}
		\ar[rr]^-{(1~~f')}
		& & {(C\st{f'}\rt B'')}\ar[rr]^-{(0~~g')}& &
		{(0\rt C')}\ar[r]& 0 }, \]
\[\xymatrix@1{  0\ar[r] & {(A\st{e}\rt I )}
		\ar[rr]^-{(f~~\left[\begin{smallmatrix} 1 \\ 0\end{smallmatrix}\right])}
		& & {(B\st{h}\rt I\oplus C)}\ar[rr]^-{(g~~\left[\begin{smallmatrix} 0 & 1\end{smallmatrix}\right])}& &
		{(C \st{1}\rt C)}\ar[r]& 0 }, \]		 	
where $h$ is the map $\left[\begin{smallmatrix} e' \\ g\end{smallmatrix}\right]$  with $e':B \rt I$  an extension of $e.$ Since $(C\st{f'}\rt B'')$ is indecomposable and  $\delta'$ is an almost split sequence, the middle term $( B \st{h}\rt I\oplus C)$  can be written as $\mathbb{X}\oplus \mathbb{Y}$, where $\mathbb{X}=( B' \st{h'}\rt D)$ is an indecomposable non-projective object and $\mathbb{Y}$ is a projective, equivalently injective, object in the Frobenius exact category $\CS_{\SG}(\La)$. Using the characterization of projective-injective objects given in Lemma \ref{Lemma 2.1}, we can write $\mathbb{Y}=(I_1\st{1}\rt I_1)\oplus (0 \rt I_2)$ for some projective modules $I_1, I_2$. If there were a non-zero direct summand of $I_2$, say$J$, then by Lemma \ref{Lemma 2.1}, $(A \st{e}\rt I)$ would be a direct summand of $(0 \rt G)$, where $G$ is a minimal right $\text{Gprj}\mbox{-}\La$-approximation of ${\rm rad}J.$ But this means that $A=0$, a contradiction. Hence $(B \st{h}\rt I\oplus C)=(B' \st{h'}\rt D)\oplus (I_1 \st{1}\rt I_1)$. This implies that $B=B'\oplus I_1$ and $I\oplus C=D\oplus I_1$. The second equality along with the fact $C$ is a non-projective module, implies that $C=D$ and $I=I'$. This completes the proof.
\end{proof}

In particular, Proposition \ref{Lemma 2.3} implies that if the middle term of the almost split sequence \eqref{3.2(2)} is decomposable, then there is the following mesh in $\Gamma_{\CS_{\SG}(\La)}$
\[\xymatrix  @R=0.3cm  @C=0.6cm {
			&&&	&&(B'\st{h'}\rt C)\ar[dr]&&&&\\	&&&	&(A\st{e}\rt I)\ar[dr]\ar[ru]\ar@{<.}[rr]\ar[ddr]\ar[dddr]&&(C\st{1}\rt C)&&&\\&&&
			&&(I_1\st{1}\rt I_1)\ar[ru]&&&&
			&&&&&&&&&\\
			&&&&& \vdots\ar[uur]&&&&\\
			&&&& & (I_n\st{1}\rt I_n)\ar[uuur]&&}\]
  where $I=\oplus^n_{i=1}I_i$ is a decomposition of $I$ into a direct sum of indecomposable modules.
	
\begin{proposition}\label{Lemma 2.4}
Consider the same notations as in Lemma \ref{Lemma-ASTM}(3). If the middle term $(\Omega_{\La}(C)\st{h} \rt A\oplus P)$ of the almost split sequence \eqref{3.2(3)} is not indecomposable, then there is a decomposition
\[(\Omega_{\La}(C)\st{h}\rt A\oplus P)=(\Omega_{\La}(C)\st{h'}\rt A)\oplus (0\rt P),\]
where $(\Omega_{\La}(C)\st{h'}\rt A)$ is an indecomposable non-projective object.		
\end{proposition}

\begin{proof}
The proof follows by applying a similar argument as that of Proposition \ref{Lemma 2.3}. Note that here we need to exclude the projective indecomposables of the form $(P\st{1}\rt P)$, and then $(0\rt  A)$  would be a direct summand of  $(G'\st{g} \rt G)$, which is  the minimal right $\CS_{\SG}(\La)$-approximation of $({\rm rad}P\st{i}\rt P)$. Hence it follows from Lemma \ref{Lemma 2.1}(3) that $G$ is projective. So $A$ should be projective as well, which is a contradiction.
\end{proof}

Analogously, in view of Proposition \ref{Lemma 2.4}, we deduce that if the middle term of  the almost split sequence \eqref{3.2(3)} is decomposable, then there is the following mesh in $\Gamma_{\CS_{\SG}(\La)}$
		\[
		\xymatrix  @R=0.3cm  @C=0.6cm {
			&&&	&&(\Omega_{\La}(C)\st{h'}\rt A)\ar[dr]&&&&\\	&&&	&(0\rt A)\ar[dr]\ar[ru]\ar@{<.}[rr]\ar[ddr]\ar[dddr]&&(\Omega_{\La}(C)\st{i}\rt P)&&&\\&&&
			&&(0\rt P_1)\ar[ru]&&&&\\
			&&&&& \vdots\ar[uur]&&&&\\
			&&&& & (0\rt P_n)\ar[uuur]&&			}
		\]
 where $P=\oplus^n_{i=1}P_i$ is a decomposition of $P$	into a direct sum of indecomposable modules.

\section{Stable Auslander-Reiten components }\label{stable componnets}
In this section, we define the notion of boundary vertices in $\Ga^s_{\CS}$, the stable Auslander-Reiten quiver of $\CS_{\SG}(\La)$, and study those components of $\Ga^s_{\CS}$  containing boundary vertices. In particular, when $\La$ is a CM-finite algebra, we provide an explicit description of the shape of such components. Also it will be shown that such components are linked to the orbits of an auto-equivalence on $\underline{\rm Gprj}\mbox{-}\La$.

\begin{setup}
In this section, we assume that $\Gprj\La$ is a contravariantly finite subcategory of $\mmod \La$. Note that, in this case, by Remark \ref{Rmk: KS}, $\Gprj\La$ is a functorially finite subcategory of $\mmod\La$. Hence by Remark \ref{SGLa}(3), $\CS_{\SG}(\La)$ is functorially finite in $\CH(\La)$ and so it has almost split sequences.
\end{setup}

\begin{notation}
Throughout the paper, we let $\vartheta$ stand for the composition $\tau_{\SG}\Omega^{-1}_{\La}\tau^2_{\SG}$, where $\tau_{\SG}$ denotes the Auslander-Reiten translation in $\Gprj\La$. Note that $\vartheta$ is an auto-equivalence on $\underline{\rm Gprj}\mbox{-}\La$.
\end{notation}

\begin{definition}
Let $G$ be  an object in $\underline{\rm Gprj}\mbox{-}\La$. The $\vartheta$-orbit of $G$, denoted by $[G]_{\vartheta}$,  is the set of all modules $\vartheta^iG$ with $i \in \mathbb{Z}$.
\end{definition}

Note that Auslander-Reiten triangles in the triangulated category $\underline{\rm Gprj}\mbox{-}\La$ are induced by almost split sequences in the exact category $\Gprj\La$. It is known that an auto-equivalence on $\underline{\rm Gprj}\mbox{-}\La$ preserves the Auslander-Reiten triangles.

\begin{remark}\label{Remark 4.4}
Since $\Omega^{-1}_{\La}$ is an auto-equivalence on $\underline{\rm Gprj}\mbox{-}\La$, it is easy to see that, with the above notations, the equalities \[\vartheta=\Omega^{-1}_{\La}\tau^3_{\SG}=\tau^3_{\SG}\Omega^{-1}_{\La}\]
hold at the level of objects. These equalities may not be functorial but for our arguments we only need them to be hold true on the objects.
\end{remark}

By Remark \ref{Remark 4.4}, we compute $\vartheta$ for some categories.  Let $d$ be an integer in $\mathbb{N}$. Recall that a Hom-finite triangulated $k$-category $\CT$ with split idempotents is called a $d$-Calabi-Yau category if there is a bifunctorial isomorphism \[\Hom_{\CT}(X, Y)\simeq D\Hom_{\CT}(Y, X[d])\]  for all $X, Y \in \CT.$

\begin{example}
\begin{itemize}
 \item [$(1)$]  Assume that the triangulated category $\underline{\rm{Gprj}}\mbox{-}\La$ is  $d$-Calabi-Yau. Hence there is a functorial isomorphism $\tau_{\SG}=\Omega^{d-1}_{\La}$. Therefore, in this case $\vartheta=\Omega^{3d-4}_{\La}$. Note that for our purpose we only need a weaker condition than the $d$-Calabi-Yau property: we do not need the bifunctorial isomorphism of the definition of a  $d$-Calabi-Yau triangulated category to be compatible with the triangulated structure.
\item [$(2)$] Assume that $\La$ is a finite-dimensional symmetric $k$-algebra over a field $k$. Then by \cite[Corollary IV.8.6]{SY}, $\tau_{\La}=\Omega^2_{\La}$. So, in this case, $\vartheta=\Omega^5_{\La}$.
\item [$(3)$] Assume that $\La$ is a $\CG$-semisimple algebra \cite{H2}; see Definition \ref{Definition 3.13}. Then $\tau_{\SG}=\Omega_{\La}$. Hence, in this case, $\vartheta=\Omega^2_{\La}$.
\end{itemize}
\end{example}

In the following, we compute $\vartheta$-orbits of  a concrete example.

\begin{example}\label{Example 3.3}
Let $\La$ be the $k$-algebra given by the quiver
 		\[
 	\xymatrix  @R=0.3cm  @C=0.6cm {
 		&&&	&&1\ar[dl]_{a}&&&&\\	&&&	&3\ar[rr]_b\ar@{.}[rr]&&2\ar[ul]_{c}&&&}
 	\]
 	and bound by $abc=0, \ bca=0, \ cab=0$. Then $\Gamma^s_{\La}$ is given by
 	\[
 	\xymatrix  @R=0.3cm  @C=0.6cm {&&&&&&&&&&&&& \\
 	  && &(1)\ar[dr]\ar@{<.}[rr]\ar@{.}[u]\ar@{.}[dd]&&(2)\ar[dr]\ar@{<.}[rr]&	&(3)\ar[dr]\ar@{<.}[rr]&&(1)\ar@{.}[u]\ar@{.}[dd]
 		&&&\\ &&& &  _{\left(\begin{smallmatrix}2\\ 1 \end{smallmatrix}\right)}\ar@{.}[l] \ar[ru]\ar@{<.}[rr]&&
 		_{\left(\begin{smallmatrix}3\\ 2 \end{smallmatrix}\right)}\ar@{<.}[rr]\ar[ru]&&_{\left(\begin{smallmatrix}1\\ 3 \end{smallmatrix}\right)}\ar[ru]\ar@{<.}[r]&&&& \\  &&&&&&&&&&} 	\]
where the vertices with the same label are identified, and are described via their composition series. Here $\La$ is selfinjective, so $\tau_{\La}=\tau_{\SG}.$
There are three $\vartheta$-orbits as follows: $\{(1), \left(\begin{smallmatrix}3\\ 2 \end{smallmatrix}\right)\}$, $\{(3), \left(\begin{smallmatrix}2\\ 1 \end{smallmatrix}\right)\}$ and $\{(2), \left(\begin{smallmatrix}1\\ 3 \end{smallmatrix}\right)\}$.
\end{example}

Throughout, for $G \in \Gprj\La$, we denote by $e:G\rt I_{G}$ the minimal left $\prj\La$-approximation of $G$. Moreover, for the ease of notation, we write $\tau_{\CS}$ for $\tau_{\CS_{\SG}(\La)}$.

\begin{proposition}\label{Proposition 2.5}
Assume that $G$ is an indecomposable non-projective Gorenstein projective module, and $d=3m+k$  with $m\geqslant 0, \  0\leqslant k \leqslant 2$. Then the following statements hold.
\begin{itemize}
\item [$(1)$] If $k=0$, then $\tau^{d}_{\CS}(0\rt G)=(0\rt  \vartheta^m G)$ and $\tau^{-d}_{\CS}(0\rt  G)=(0\rt  \vartheta^{-m}G)$.
\item [$(2)$] If $k=1$, then $\tau^{d}_{\CS}(0\rt  G)=(\tau_{\SG}\vartheta^m G\st{1}\rt  \tau_{\SG}\vartheta^m G )$ and  $\tau^{-d}_{\CS}(0\rt G )=(\Omega_{\La}\tau^{-1}_{\SG}\vartheta^{-m}G\st{i}\rt  P_{\tau^{-1}_{\SG}\vartheta^{-m}G} )$.
		\item [$(3)$]If $k=2$, then $\tau^{d}_{\CS}(0\rt G)=(\tau^2_{\SG}\vartheta^m G\st{e}\rt  I_{\tau^2_{\SG}\vartheta^m G})$ and $\tau^{-d}_{\CS}(0\rt G )=(\tau^{-1}_{\SG}\Omega_{\La}\tau^{-1}_{\SG}\vartheta^{-m}G \st{1}\rt \tau^{-1}_{\SG}\Omega_{\La}\tau^{-1}_{\SG}\vartheta^{-m}G)$.
\end{itemize}
\end{proposition}

\begin{proof}
All the statements follow by repeated application of Lemma \ref{Lemma-ASTM}. We  only give a proof  for the first part of $(1)$. We do this by induction on $m$. Assume that  $\tau^{3m}_{\CS}(0\rt G)=(0\rt  \vartheta^m G )$.  Applying Lemma \ref{Lemma-ASTM}(1) to the almost split sequence $0 \rt \tau_{\SG}\vartheta^mG\st{f}\rt B\rt \vartheta^mG\rt 0$ in $\Gprj\La$ gives the following almost split sequence in $\CS_{\SG}(\La)$
\[\xymatrix@1{  0\ar[r] & {(\tau_{\SG}\vartheta^m G\st{1}\rt \tau_{\SG}\vartheta^mG )}
	\ar[r]
	 & {(\tau_{\SG}\vartheta^mG\st{f}\rt B)}\ar[r] &
	{(0\rt  \vartheta^m G)}\ar[r]& 0. } \ \    \]
Hence by the induction hypothesis and the above sequence,  $\tau^{(3m+1)}_{\CS}(0\rt G)=(\tau_{\SG}\vartheta^m G \st{1}\rt \tau_{\SG}\vartheta^m G)$.   Now by applying Lemma \ref{Lemma-ASTM}(2) to the almost split sequence $0 \rt \tau^2_{\SG}\vartheta^mG\rt B'\rt \tau_{\SG}\vartheta^mG\rt 0$ in $\text{Gprj}\mbox{-}\La$, we obtain the following almost split sequence in $\CS_{\SG}(\La)$
\[\xymatrix@1{  0\ar[r] & {( \tau^2_{\SG}\vartheta^mG\st{e}\rt I_{\tau^2_{\SG}\vartheta^mG} )}	\ar[r]
	& {(B'\st{h}\rt I_{\tau^2_{\SG}\vartheta^mG}\oplus \tau_{\SG}\vartheta^mG )}\ar[r] &
	{( \tau_{\SG}\vartheta^m G\st{1}\rt  \tau_{\SG}\vartheta^m G)}\ar[r]& 0. } \ \    \]
Thus $\tau^{(3m+2)}_{\CS}(0\rt G)=(\tau^2_{\SG}\vartheta^m G \st{e}\rt I_{\tau^2_{\SG}\vartheta^m G})$. Finally, by applying Lemma \ref{Lemma-ASTM}(3) to the almost split sequence  $0 \rt \tau_{\SG}\Omega^{-1}_{\La}\tau^2_{\SG}\vartheta^mG\rt B''\rt \Omega^{-1}_{\La}\tau^2_{\SG}\vartheta^mG\rt 0 $ in $\text{Gprj}\mbox{-}\La$, we obtain the following almost split sequence in $\CS_{\SG}(\La)$
{\footnotesize \[\xymatrix@1{  0\ar[r] & {(0 \rt  \tau_{\SG}\Omega^{-1}_{\La}\tau^2_{\SG}\vartheta^mG )}
	\ar[r]
	& {(\tau^2_{\SG}\vartheta^mG \st{h'}\rt \tau_{\SG}\Omega^{-1}_{\La}\tau^2_{\SG}\vartheta^mG\oplus I_{\tau^2_{\SG}\vartheta^mG})}\ar[r] &
	{( \tau^2_{\SG}\vartheta^mG\st{e}\rt  I_{\tau^2_{\SG}\vartheta^mG})}\ar[r]& 0. } \ \    \]}
Note that ${\rm Cok}(e)=\Omega^{-1}_{\La}\tau^2_{\SG}\vartheta^m G.$	 So $\tau^{3(m+1)}_{\CS}(0\rt G )=(0\rt  \tau_{\SG}\Omega^{-1}_{\La}\tau^2_{\SG}\vartheta^m G )=(0\rt  \vartheta^{m+1} G )$. This completes the proof.
\end{proof}

\begin{definition}
Let $M$ be an object in $\CS_{\SG}(\La)$.  The $\tau_{\CS}$-orbit of $M$ is the set of all objects $\tau^n_{\CS}M$, with $n \in \mathbb{Z}$. The module $M$ is called $\tau_{\CS}$-periodic if $\tau^m_{\CS}M \simeq M$ holds for some integer $m\geqslant 1.$
\end{definition}

The following corollary is an immediate consequence of  Proposition \ref{Proposition 2.5}.

\begin{corollary}\label{Corolary 2.6}
Let $G$ be an indecomposable non-projective Gorenstein projective module. If $\La$ is CM-finite, then the indecomposable objects $(0\rt  G)$, $(G\st{1}\rt G )$ and $(\Omega_{\La}(G)\st{i}\rt P_G )$ are $\tau_{\CS}$-periodic.
\end{corollary}

\begin{proof}
We only prove that $(0\rt  G )$ is $\tau_{\CS}$-periodic. One can show the others by using the similar argument. By Proposition \ref{Proposition 2.5}, $\tau^{3m}_{\CS}(0\rt  G)=(0\rt  \vartheta^m G)$ for every $m \in \mathbb{Z}$. Since $\La$ is CM-finite, the set $\{\vartheta^mG \mid m \in \mathbb{Z}\}$ is finite and so the set $\{\tau^{3m}_{\CS}(0\rt  G)\mid m \in \mathbb{Z}\}$ is finite. 	
\end{proof}
We emphasize that $\CS_{\SG}(\La)$ need not  have finite representation type even if $\La$ is CM-finite.

\hspace{ 1 mm}

Let $\Ga^s_{\CS_{\SG}(\La)}$ be the stable Auslander-Reiten quiver of  $\Ga_{\CS_{\SG}(\La)}$. For the ease of notation, $\Ga_{\CS_{\SG}(\La)}$ and $\Ga^s_{\CS_{\SG}(\La)}$ will be denoted by $\Ga_{\CS}$ and $\Ga^s_{\CS}$, respectively.

\begin{definition}\label{Definition-types}
A vertex in $\Ga^s_{\CS}$ is said to be a boundary vertex if it has one of the forms
\[(a) \ (0\rt G); \ \ \ (b) \ (G\st{1}\rt G) \ \ \ {\rm or} \ \ \ (c) \ (\Omega_{\La}(G)\st{i}\rt P),\]
where $G$ is an indecomposable Gorenstein projective module.
\end{definition}

Note that the boundary vertices here are a very substantial generalization of the boundary modules in \cite[(5.1)]{RS2}.

For our next theorem we need to recall the definition of the repetition of a quiver.

\begin{definition} \label{Definition 3.9}
Let $\Delta=(\Delta_0, \Delta_1)$ be a quiver with the sets $\Delta_0$ and $\Delta_1$ of vertices and arrows, respectively. The repetition of $\Delta$, denoted by
$\mathbb{Z}\Delta$, is a quiver which is defined as follows:
\begin{itemize}
\item [$\mbox{-}$] $(\mathbb{Z}\Delta)_0=\mathbb{Z}\times\Delta_0$
\item [$\mbox{-}$] $(\mathbb{Z}\Delta)_1=\mathbb{Z}\times \Delta_1 \cup \sigma(\mathbb{Z}\times \Delta_1)$ with arrows $(n,\alpha):(n,x)\rightarrow (n,y)$ and
  $\sigma(n,\alpha):(n-1,y)\rightarrow (n,x)$ for each arrow $\alpha:x\rightarrow y$ in $\Delta_1$ and $n \in \mathbb{Z}$.
\end{itemize}
\end{definition}

\begin{theorem}\label{Proposition 2.7}
Let  $\La$ be a CM-finite algebra and let $\Ga$ be a component of $\Ga^s_{\CS}$ containing a boundary vertex. Then the following statements hold.
\begin{itemize}
\item [$(1)$] If the cardinality of  $\Ga$ is finite, then $\Ga=\Z\Delta/G$, where $\Delta$ is a Dynkin quiver and $G$ is an automorphism group of $\Z\Delta$ containing a positive power of the translation.
\item [$(2)$] If $\Ga$ is infinite, then  $\Ga$ is a stable tube.
\end{itemize}
In particular, if $\CS_{\SG}(\La)$ is of finite representation type, then $\Ga^s_{\CS}$ is a disjoint union of the finite components containing a boundary vertex.	
\end{theorem}

\begin{proof}
Since we have removed the projective-injective vertices, the component $\Ga$ is stable. According to \cite[Theorem 5.5]{Li},  for both $(1)$ and $(2)$ it suffices to show that the component contains a $\tau_{\CS}$-periodic object. Our assumption in conjunction with Corollary \ref{Corolary 2.6} guarantees the existence of such a vertex. So we only need to consider the case where $\CS_{\SG}(\La)$ is of finite representation type and each component contains a boundary vertex.

Without loss of generality we may assume that $\La$ is an indecomposable algebra, i.e., $\prj\La$ is a connected category. This implies that $\prj T_2(\La)$ is connected as well. Since $\CS_{\SG}(\La)$ contains $\prj T_2(\La)$,  we conclude that  $\CS_{\SG}(\La)$ is a connected category.  Since $\CS_{\SG}(\La)$ is connected and  of finite representation type, it follows from  \cite[Lemma 5.1]{Li} that $\Ga_{\CS}$ is connected. It means that there is a walk between any two vertices. Assume that $\Ga$ is an arbitrary component of  $\Ga^s_{\CS}$. Pick a vertex $x$ of $\Ga.$ The connectedness of $\Ga_{\CS}$ yields a walk $y=x_0\longleftrightarrow x_1 \longleftrightarrow \cdots \longleftrightarrow x_t=x $ such that $y$ is a projective-injective vertex and for $d >0$, $x_d$ is not a projective vertex. Note that $\Ga_{\CS}$ contains all projective-injective vertices. By  $x_d\longleftrightarrow x_{d+1}$ we mean that there is either an arrow $x_d\rt x_{d+1}$ or an arrow $x_{d+1}\rt x_d$ in $\Ga^s_{\CS}$. If $y$ is isomorphic to  $(0\rt  P)$ for some indecomposable projective module $P$, then by Proposition \ref{Lemma 2.4}, $x_1$ has to be of the form $(a)$ or $(c)$.  If $y$ is isomorphic to $(P\st{1}\rt P)$ for some indecomposable projective module $P$, then by Proposition \ref{Lemma 2.3}, $x_1$ has to be of the form $(b)$ or $(c)$. So for  both cases the vertex $x_1$ is boundary, as desired.
\end{proof}

To state our next result we need some notations. Let $G$ be an indecomposable non-projective Gorenstein projective $\La$-module. The unique component of the stable Auslander-Reiten quiver $\Ga^s_{\CS}$ containing the vertex $(0\rt  G)$ will be denoted by $\Ga^s_{\CS}(G)$.

\begin{notation}
Let $U$ be the complete set of all pairwise non-isomorphic indecomposable non-projective Gorenstein projective modules. Set
\[\mathcal{O}:=\{[G]_{\vartheta}\mid G \in U \}, \]
where $\vartheta=\tau_{\SG}\Omega^{-1}_{\La}\tau^2_{\SG}$ to be the set of all $\vartheta$-orbits of the stable category $\underline{\rm{Gprj}}\mbox{-}\Lambda$ and
\[ \mathcal{C}:=\{\Ga^s_{\CS}(G)\mid G \in U\},\]
to be the set of all components of the stable Auslander-Reiten quiver $\Ga^s_{\CS}$ containing a vertex of the form $(0\rt  G)$, where $G \in U$.
\end{notation}

\begin{lemma}\label{Lemma-delta}
There exists a well-defined map $\delta:\mathcal{O}\rt \mathcal{C}$ sending $[G]_{\vartheta}$ to $\Ga^s_{\CS}(G)$.
\end{lemma}

\begin{proof}
Let $G'$ belong to the equivalence class  $[G]_{\vartheta}$. Hence there is an integer $m$ such that $\vartheta^m(G)=G'.$ Proposition \ref{Proposition 2.5} implies that $\tau^{3m}_{\CS}(0\rt  G)=(0\rt  \vartheta^m(G))=(0\rt  G')$. So $(0\rt  G)$ and $(0\rt  G')$ lie in the same $\tau_{\CS}$-orbit, and consequently the same component of $\Ga^s_{\CS}$. Therefore,  the vertices $(0\rt G)$ and $(0\rt  G')$ are connected via a path in $\Ga^s_{\CS}$. By Propositions \ref{Lemma 2.3} and \ref{Lemma 2.4}, the almost split sequences in $\CS_{\SG}(\La)$ with ending terms in the $\tau_{\CS}$-orbits of $(0\rt  G )$  or $(0\rt  G')$ have a non projective-injective direct summand in their middle terms. Hence we can find a path between the vertices $(0\rt  G)$ and $(0\rt  G')$ in $\Ga^s_{\CS}$. But this means that $\Ga^s_{\CS}(G)=\Ga^s_{\CS}(G')$. So $\delta$ is well-defined.
\end{proof}

Let $\mathcal{C}^{\infty}$ denote the subset of $\mathcal{C}$ consisting of all infinite components and $\mathcal{O}^{\infty}$ denote the inverse image of  $\mathcal{C}^{\infty}$ under the map $\delta.$

Recall that the set of all vertices of a stable tube having exactly one immediate predecessor, equivalently having exactly one immediate successor, is called a mouth of the tube.

\begin{proposition}\label{proposition 3.7}
With the above notation, the following statements hold.
\begin{itemize}
\item [$(1)$] The map $\delta$ is surjective.
\item [$(2)$] The restricted map $\delta':\mathcal{O}^{\infty}\rt \mathcal{C}^{\infty}$ is bijective.
\end{itemize}
\end{proposition}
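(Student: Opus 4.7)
My plan for proving this proposition is the following. For part (1), I would observe that surjectivity of $\delta$ is essentially tautological from the definition of $\mathcal{K}$: every element of $\mathcal{K}$ is, by construction, a component of the form $\Gamma^s_{\CS}(G)$ for some $G \in U$, and this is precisely $\delta([G]_{\vartheta})$. For part (2), surjectivity of the restricted map $\delta|$ is then automatic (since $\mathcal{R}^{\infty} = \delta^{-1}(\mathcal{K}^{\infty})$), so all the work lies in proving injectivity.

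Suppose $[G]_{\vartheta}, [G']_{\vartheta} \in \mathcal{R}^{\infty}$ have the same image under $\delta$, so that $\left(\begin{smallmatrix} 0 \\ G \end{smallmatrix}\right)_0$ and $\left(\begin{smallmatrix} 0 \\ G' \end{smallmatrix}\right)_0$ both lie in the same infinite component $\Gamma$. By Proposition~\ref{Proposition 2.7}(2), $\Gamma$ is a stable tube. The plan is then (i) to show that both vertices lie on the mouth of $\Gamma$, (ii) to invoke the standard fact that the mouth of a stable tube forms a single $\tau_{\CS}$-orbit, and (iii) to extract the required $\vartheta$-equivalence from Proposition~\ref{Proposition 2.5}.

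The key technical step is (i): I would show that every boundary vertex has exactly one immediate predecessor in $\Gamma^s_{\CS}$, which is the characterising property of mouth vertices in a stable tube. For forms (b) and (c), this follows directly from Lemmas~\ref{Lemma 2.3} and~\ref{Lemma 2.4}: the middle term of the corresponding almost split sequence is either indecomposable, or decomposes as an indecomposable non-projective-injective object plus a single projective-injective summand (namely $\left(\begin{smallmatrix} I \\ I \end{smallmatrix}\right)_1$ for form (b) and $\left(\begin{smallmatrix} 0 \\ P \end{smallmatrix}\right)_0$ for form (c), both of which are projective-injective by Lemma~\ref{Lemma 2.1}(1)), and this extra summand is removed in passing to $\Gamma^s_{\CS}$. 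For form (a), Lemma~\ref{AlmostSplittrivialmonomorphisms}(1) presents the middle term as $\left(\begin{smallmatrix} A \\ B \end{smallmatrix}\right)_f$ with $A = \tau_{\CG} C$ and $C$ both indecomposable; I would argue directly that any potential decomposition would be forced to split off a summand of the form $\left(\begin{smallmatrix} 0 \\ B_2 \end{smallmatrix}\right)_0$ (by indecomposability of $A$), whereupon indecomposability of the cokernel $C$ together with non-splitness of the almost split sequence forces $B_2 = 0$, so the middle term is itself indecomposable.

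With (i) established, steps (ii)--(iii) become routine: (ii) gives $\left(\begin{smallmatrix} 0 \\ G' \end{smallmatrix}\right)_0 = \tau^n_{\CS}\left(\begin{smallmatrix} 0 \\ G \end{smallmatrix}\right)_0$ for some $n \in \Z$, and since the translate remains of form (a), Proposition~\ref{Proposition 2.5} forces $n = 3m$ and $G' \simeq \vartheta^m G$, hence $[G]_{\vartheta} = [G']_{\vartheta}$. The main obstacle I anticipate is the indecomposability of the form (a) middle term in step (i); once that is settled, the remainder reduces to bookkeeping with the orbit formulas of Proposition~\ref{Proposition 2.5}.
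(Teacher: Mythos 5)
Your proposal is correct and follows essentially the same route as the paper: both identify the boundary vertices as mouth vertices of the stable tube via the structure of the middle terms (Lemmas~\ref{Lemma 2.3} and~\ref{Lemma 2.4}), use that the mouth is a single $\tau_{\CS}$-orbit, and then read off the $\vartheta$-orbit from Proposition~\ref{Proposition 2.5}. The only difference is that you additionally verify indecomposability of the middle term of the form-(a) almost split sequence, a step the paper omits because exhibiting one mouth vertex in the $\tau_{\CS}$-orbit (via Lemma~\ref{Lemma 2.3}) already places the whole orbit on the mouth.
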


\begin{proof}
The statement $(1)$ follows by definition. To prove the statement $(2)$, assume that the component $\Ga^s_{\CS}(G)$ is infinite, for an indecomposable non-projective Gorenstein projective module $G$. Hence by Proposition \ref{Proposition 2.7}, $\Ga^s_{\CS}(G)$ is a stable tube. By Proposition \ref{Lemma 2.3}, the middle terms of the almost split sequences with ending terms of vertices by the $\tau_{\CS}$-orbit of $(0 \rt G)$ contain exactly one non-projective direct summand. Hence  the $\tau_{\CS}$-orbit of $(0 \rt G)$ generates all the vertices in the mouth of the stable tube.  Since the mouth of any stable tube is unique, we conclude that  $\Ga^s_{\CS}(G)$ is uniquely determined by the equivalence class $[G]_{\vartheta}$. Hence the restricted map $\delta'$ is injective, consequently, it is  bijective by the first part.
\end{proof}

If $\CS_{\SG}(\La)$ is of finite representation type, then it is clear that, for an indecomposable non-projective module $G$ in $\Gprj\La$, the associated component $\Ga^s_{\CS}(G)$ is finite. In contrast, for an indecomposable Artin algebra $\La$ when $\CS_{\SG}(\La)$ is of infinite representation type, we may have both finite or infinite components $\Ga^s_{\CS}(G)$ over an indecomposable algebra $\La$. We see this fact in our next example. Let us preface it with a remark.

\begin{remark}\label{Example-remark}
Let $\Lambda$ be a CM-finite algebra and let  $V$ be the associated stable Cohen-Macaulay Auslander algebra. By \cite[Theroem 6.2]{H} we have the embedding $\Ga_{V} \subseteq \Ga_{\CS}$. It is easy to see that the embedding can be restricted to the embedding $\Ga^s_{V} \subseteq \Ga^s_{\CS}$, and  further under this embedding each component $\Delta$ of $\Ga^s_V$ is contained in exactly one component $\Delta'$ of $\Ga^s_{\CS}$. In addition,  $\Delta$ is finite if and only if $\Delta'$ is finite, see \cite[Theorem 6.1]{H}.
\end{remark}

\begin{example}
Let $\CQ$  be the quiver  $$\xymatrix{ & a  \ar@(l,u)[]^{\alpha} \ar[r]^{\beta} & b },$$ $I$  the ideal generated by $\alpha^3$ and let $A=k\CQ/I$ be the associated bound quiver algebra. Moreover, let $\CQ'$ be the quiver $$\xymatrix{ & c  \ar@(l,u)[]^{\gamma}},$$ $I'$  the ideal generated by $\gamma^6$ and let $B=k\CQ'/I'$ be the associated bound algebra. Assume that $\La$ is a simple gluing algebra of $A$ and $B$, obtained by identifying the vertices $b$ and $c$, see \cite{Lu} for the precise definition of simple gluing of algebras. Then $\La$ is clearly an indecomposable algebra. We claim that the stable Auslander-Reiten quiver $\Ga^s_{\CS_{\SG}(\La)}$ contains a finite component $\Ga^s_{\CS}(G)$ and an infinite component $\Ga^s_{\CS}(G')$ for some indecomposable non-projective modules $G, G'$ in $\text{Gprj}\mbox{-}\La$.

To prove the claim, first note that by \cite[Theorem 4.4]{Lu}, $\underline{\text{Gprj}}\mbox{-}\La\simeq \underline{\text{Gprj}}\mbox{-}A\oplus \underline{\text{Gprj}}\mbox{-}B.$ This equivalence, in turn, implies that $\Ga_{V_3}=\Ga_{V_1} \times \Ga_{V_2}$, where $V_1, V_2$ and $V_3$ are respectively the stable Cohen-Macaulay Auslander algebras of $A$, $B$ and $\La$. Indeed, $\underline{\text{Gprj}}\mbox{-}A\simeq \underline{\text{Gprj}}\mbox{-}k[x]/(x^3)$, as the vertex $b$ is a sink,  and being selfinjective of  $k[x]/(x^i)$ for any $i$, we can consider $V_1$ and $V_2$ as stable Auslander algebras. In view of the classification given in \cite{RS2}, we deduce that every component of $\Ga^s_{\CS(k[x]/(x^3))}$ is finite. So by Remark \ref{Example-remark}, every  component of $\Ga^s_{V_1}$ is also finite. On the other hand, by \cite{RS2} every  component of $\Ga^s_{\CS(k[x]/(x^6))}$ is infinite. So by another use of  Remark \ref{Example-remark}, it  implies that every component of $\Ga^s_{V_2}$  is also infinite. Therefore, we get the indecomposable algebra $\La$ having  the desired property.
\end{example}

 \section{Finite components containing a boundary vertex}\label{Sec 5}

 In this section, we study finite components of $\Ga^s_{\CS}$ containing a boundary vertex. Using the notion of $\CG$-semisimple modules, we classify the components of $\Ga^s_{\CS}$ which consist only of boundary vertices. We then give an example of a component in $\Gamma^s_{\CS}$ with three different $\tau_{\CS}$-orbits each containing a boundary vertex.

\begin{setup}
In this section, we assume that $\Gprj\La$ is a contravariantly finite subcategory of $\mmod \La$. We recall that $\tau_{\SG},$ resp. $\tau_{\CS}$, denotes the Auslander-Reiten translation in $\Gprj\La$, resp. $\CS_{\SG}(\La)$.
\end{setup}

We begin with the following definition.

\begin{definition} \label{Definition 3.13}
Let $G$ be an indecomposable non-projective Gorenstein projective  module. The module $G$ is called $\CG$-semisimple if the canonical short exact sequence $0 \rt \Omega_{\La}(G)\rt P_G\rt G\rt 0$ is an almost split sequence in $\Gprj\La$. Following \cite{H2}, an algebra $\La$ is called $\CG$-semisimple  if every indecomposable non-projective Gorenstein projective module in $\Gprj\La$ is $\CG$-semisimple.
\end{definition}

\begin{lemma}\label{Lemma 2.9}
Let $G$ be a $\CG$-semisimple  module. Then for any $i \in  \Z$, $\Omega^i_{\La}(G)$ is a $\CG$-semisimple module.  In particular, for any $i \in \Z$, $\tau^i_{\SG}G=\Omega^i_{\La}(G)$. 	
\end{lemma}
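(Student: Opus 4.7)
My plan is to prove the case $|i| = 1$ directly and then induct on $|i|$. By definition of $\CG$-semisimple, the canonical short exact sequence
$$ 0 \lrt \Omega_{\La}(G) \lrt P_G \lrt G \lrt 0 \qquad (\dagger) $$
is an almost split sequence in $\rm{Gprj}\mbox{-}\La$; in particular $\tau_{\CG} G = \Omega_{\La}(G)$. It therefore remains, for the base case, to show that both $\Omega_{\La}(G)$ and $\Omega^{-1}_{\La}(G)$ are $\CG$-semisimple.

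Since $\rm{Gprj}\mbox{-}\La$ is a Frobenius exact category, the stable category $\underline{\rm{Gprj}}\mbox{-}\La$ is triangulated with suspension $[1] = \Omega^{-1}_{\La}$, and the standard correspondence identifies almost split sequences in $\rm{Gprj}\mbox{-}\La$ with Auslander-Reiten triangles in $\underline{\rm{Gprj}}\mbox{-}\La$. Passing $(\dagger)$ to the stable category yields an AR triangle whose middle term vanishes; applying the auto-equivalence $[1]$ (which preserves AR triangles) and lifting the resulting triangle back to $\rm{Gprj}\mbox{-}\La$ produces an almost split sequence
$$ 0 \lrt G \lrt B \lrt \Omega^{-1}_{\La}(G) \lrt 0, $$
in which $B$ is isomorphic to zero in the stable category and is therefore projective. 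Note also that $\Omega^{-1}_{\La}(G)$ is indecomposable and non-projective, inheriting both properties from $G$ via the auto-equivalence $\Omega^{-1}_{\La}$. A symmetric application of the shift $[-1] = \Omega_{\La}$ produces an almost split sequence $0 \lrt \Omega_{\La}^{2}(G) \lrt B' \lrt \Omega_{\La}(G) \lrt 0$ with $B'$ projective.

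The crucial identification is that $B \cong P_{\Omega^{-1}_{\La}(G)}$ and $B' \cong P_{\Omega_{\La}(G)}$. This follows because the right-hand map of any almost split sequence is right minimal, and a right-minimal epimorphism with projective source is automatically a projective cover. Consequently the two almost split sequences above coincide with the canonical sequences of $\Omega^{-1}_{\La}(G)$ and $\Omega_{\La}(G)$ respectively, which proves that each of these modules is $\CG$-semisimple.

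For the induction, once $\Omega^i_{\La}(G)$ is known to be $\CG$-semisimple for some $i \in \Z$, the same shift-and-lift argument applied to its canonical AR sequence shows that $\Omega^{i \pm 1}_{\La}(G)$ is $\CG$-semisimple as well, and the identity $\tau^i_{\CG} G = \Omega^i_{\La}(G)$ reads off directly from the starting term of the canonical AR sequence at each step. The main technical obstacle is the minimality step: one must rule out that the middle term of the lifted AR sequence differs from the projective cover by a spurious projective summand, which forces a careful use of the right-minimality of almost split sequences together with the uniqueness of projective covers up to isomorphism.
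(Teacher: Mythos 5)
Your proof is correct, but it follows a genuinely different route from the paper's. The paper argues by induction entirely inside the exact category $\rm{Gprj}\mbox{-}\La$: for the almost split sequence ending at $\Omega^i_{\La}(G)$, if the middle term had a non-projective indecomposable summand $C$, then $\tau^{-1}_{\CG}C$ would be a non-projective summand of the middle term of the almost split sequence \emph{starting} at $\Omega^i_{\La}(G)$, which by the induction hypothesis is the projective module $P^{i-1}$ --- a contradiction; right minimality then identifies the sequence with the canonical one, giving $\tau_{\CG}\Omega^i_{\La}(G)=\Omega^{i+1}_{\La}(G)$. You instead pass to the triangulated stable category $\underline{\rm{Gprj}}\mbox{-}\La$, note that $\CG$-semisimplicity of $G$ says exactly that the AR triangle ending at $\underline{G}$ has zero middle term, shift by $\Omega^{\mp1}_{\La}$ (which preserves AR triangles), and translate back, using that a right minimal epimorphism with projective source is a projective cover. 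Your route is more conceptual, handles positive and negative $i$ symmetrically (indeed one shift by $[i]$ would treat all $i$ at once), and uses exactly the machinery the paper itself invokes in Section 3 for the $\vartheta$-orbits; the paper's route is more elementary, needing only the standard relation between irreducible maps into and out of a vertex. Two points you gloss over and should make explicit, though neither is a genuine gap: (i) the correspondence between almost split sequences and AR triangles is being used in the degenerate case of a projective (stably zero) middle term --- this does hold, since any stably non-retraction into $G$ is represented by a non-retraction in $\rm{Gprj}\mbox{-}\La$, hence factors through the projective middle term and is stably zero; and (ii) ``lifting the shifted triangle back'' should be justified by taking the almost split sequence in $\rm{Gprj}\mbox{-}\La$ ending at $\Omega^{\pm1}_{\La}(G)$ --- which exists by the standing contravariant-finiteness assumption of this subsection, the same existence the paper's proof relies on --- passing it to the stable category, and invoking uniqueness of the AR triangle ending at a fixed indecomposable to conclude that its middle term is stably zero, hence projective, after which your projective-cover argument applies.
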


\begin{proof}
We only prove the lemma for positive integers $i$. We do this by induction. The statement holds true for $i=0$ by assumption. So we assume that $i> 0$. Consider  the almost split sequence $0 \rt \tau_{\SG}\Omega^i_{\La}(G)\rt B\rt \Omega^i_{\La}(G)\rt 0$ in $\Gprj\La$. We claim that $B$ has no  non-projective indecomposable direct summand. Assume to contrary that $B$ has a non-projective indecomposable direct summand, say $C$. This implies that $P^{i-1}$ has the non-projective summand $\tau^{-1}_{\SG}C$, because by induction assumption, $P^{i-1}$ is the middle term of the almost split sequence in $\Gprj\La$ starting with $\Omega^i_{\La}(G)$. This contradicts the fact that $P^{i-1}$ is projective. Hence $B$ is a projective module. Consequently, by using the fact that the morphisms involved in an almost split sequence are minimal, we deduce that	$\tau_{\SG}\Omega^i_{\La}(G)=\Omega^{i+1}_{\La}(G)$.
\end{proof}

\begin{proposition}\label{G-semisimpeVertex}
Let $\La$ be a CM-finite algebra and let $G$ be a $\CG$-semisimple module. Then the irreducible morphisms in the  component $\Ga^s_{\CS}(G)$ define  an oriented cycle
\[
\xymatrix@C=-1em@R=3ex{ & (0\rt G) \ar[dl]\\
(G\st{1}\rt G)\ar[dd] & & (\Omega_{\La}(G) \st{u_0}\rt P^0)\ar[ul] \\
\\
(G\st{u_{n-1}}\rt P^{n-1})\ar[dr] & & (\Omega_{\La}(G)\st{1}\rt \Omega_{\La}(G))\ar[uu] \\
& \cdots\ar[ur] }
\]
and which consists of the vertices
\[( 0\rt  \Omega^i_{\La}(G)), \ ( \Omega^i_{\La}(G)\st{1}\rt \Omega^i_{\La}(G)) \ \text{and} \ \  (\Omega^{i+1}_{\La}(G)\st{u_i}\rt P^i), \]
where $0 \leqslant i \leqslant n-1$ and $u_i$ denotes the canonical inclusion.
\end{proposition}

\begin{proof}
Since $\La$ is CM-finite, the set $\{\Omega^i_{\La}(G)\mid i \in \Z \}$ is finite. Hence we may choose a minimal positive number $n$ with $G=\Omega^n_{\La}(G)$. Consider the following exact sequence induced by a minimal projective resolution of $G$
\[0 \rt \Omega^n_{\La}(G)\rt P^{n-1}\rt \cdots P^1\rt P^0\rt G\rt 0.\]

We split the above sequence to the following short exact sequences
\[\epsilon_i:  0 \rt \Omega^{i+1}_{\La}(G)\rt P^i \rt \Omega^i_{\La}(G)\rt 0,\]
where $0 \leqslant i \leqslant n-1$. By Lemma \ref{Lemma 2.9}, all sequences $\epsilon_i$ are almost split sequences in $\Gprj\La$.
	
By applying Propositions \ref{Lemma 2.3}, \ref{Lemma 2.4} and Lemma \ref{Lemma-ASTM}(1) to the short exact sequences $\epsilon_0$ and $\epsilon_1$, we obtain the following full subquiver of $\Ga^s_{\CS}$.
{\tiny {\[
\xymatrix  @R=0.4cm  @C=0.2cm {
&&&&(0\rt P^0)\ar[dr]& &&	\\   &(\Omega^2_{\La}(G)\st{1}\rt \Omega^2_{\La}(G))\ar[dr]\ar@{<.}[rr]&	&(0\rt \Omega_{\La}(G))\ar[dr]\ar[ru]\ar@{<.}[rr]&&
	(\Omega_{\La}(G)\st{u_0}\rt P^0)\ar[dr]&&&\\  (0\rt \Omega^2_{\La}(G))\ar[dr]\ar[ru]\ar@{<.}[rr]&&
	(\Omega^2_{\La}(G)\st{u_1}\rt P^1)\ar[dr]\ar@{<.}[rr]\ar[ru]&&(\Omega_{\La}(G)\st{1}\rt \Omega_{\La}(G))\ar[ru]\ar@{<.}[rr]&&(0\rt G)&&\\
		&(0\rt P^1)\ar[ur]&	&(P^1\st{1}\rt P^1)\ar[ru]&&&&&}
\]}}
Note that projective modules $P^0$ or $P^1$ might not be indecomposable, which is not a problem in our argument. Repeating the same construction for the pairs of the short exact sequences $(\epsilon_1, \epsilon_2)$, $(\epsilon_2, \epsilon_3)$, $\ldots$, $(\epsilon_{n-2}, \epsilon_{n-1})$, we obtain $n-1$ full subquivers of $\Ga^s_{\CS}$ as the above such that one corresponding to $(\epsilon_{n-2}, \epsilon_{n-1})$ has the object $(0 \rt G )$  in the leftmost side. Hence the construction will stop at $(n-1)$-th step. By gluing the obtained full subquivers we get the full subquiver $\tilde{\Ga}$ of $\Ga^s_{\CS}$ containing the $\tau_{\CS}$-orbit of $(0 \rt  G)$.    By deleting the projective-injective vertices of  the full subquiver $\tilde{\Ga}$,    we then get the following component $\tilde{\Ga}^s$ of $\Ga^s_{\CS}$ containing the vertex $( 0\rt  G)$
{\tiny{\[
	\xymatrix  @R=0.5cm  @C=0.7mm {
		&(G\st{1}\rt G)\ar[rd]\ar@{<.}[r]&&&\cdots&	&(0\rt \Omega_{\La}(G))\ar[dr]\ar@{<.}[rr]&&(\Omega_{\La}(G)\st{u_0}\rt P^0)\ar[dr]&\\(0\rt G)\ar[ru]\ar@{<.}[rr]&&(G\st{u_{n-1}}\rt P^{n-1})&&\cdots&
		(\Omega^2_{\La}(G)\st{u_1}\rt P^1)\ar@{<.}[rr]\ar[ru]&&(\Omega_{\La}(G)\st{1}\rt \Omega_{\La}(G))\ar[ru]\ar@{<.}[rr]&&(0\rt G)}\]}}
which releases all facts of this proposition.
\end{proof}

\begin{remark}
 The proof of Proposition \ref{G-semisimpeVertex} implies that the cardinality of $\Ga^s_{\CS}(G)$ is equal to $3n$, where $n$ is the least positive integer satisfying $\Omega^n_{\La}(G)=G$. In the next section we will investigate this three-periodic phenomenon in a more general setting.
\end{remark}

The following lemma is needed in the proof of our next theorem.

\begin{lemma}\label{Lemma-SY}
 Let $X$ and $ Y$ be  two vertices in the same connected  component of  $\Ga^s_{\CS}$ lying in different $\tau_{\CS}$-orbits. Then there is a sectional path in $\Ga^s_{\CS}$ from $X$ to $\tau^m_{\CS}Y$, for some integer $m$.
\end{lemma}

\begin{proof}
The same argument as in the proof of  \cite [Lemma IV.15.5]{SY} applies to this setting too.
\end{proof}

\begin{theorem}\label{Proposition 2.13}
Let $G$ be an indecomposable non-projective Gorenstein projective module. If the associated component $\Ga^s_{\CS}(G)$ is finite, then $\Ga^s_{\CS}(G)$ contains  at most two distinct $\tau_{\CS}$-orbits containing a boundary vertex different from the $\tau_{\CS}$-orbit of the boundary vertex $(0 \rt G)$.
\end{theorem}

\begin{proof}
Assume that
\[X_n\st{a_n}\rt \cdots \rt X_2\st{a_2}\rt X_1=(0 \rt G)\]
is the longest sectional path in $\Ga^s(G)$. 	According to Proposition \ref{Proposition 2.7},  we have  $\Ga^s_{\CS}(G)=\Z\Delta/G$ for some valued Dynkin quiver $\Delta.$ Therefore, the following three forms of the meshes could appear in $\Ga^s_{\CS}(G)$
\[\xymatrix  @R=0.3cm  @C=0.6cm {
	(1)	&\bullet\ar[dr]&&\\		\bullet\ar[ru]\ar@{<.}[rr]&&\bullet&}  \xymatrix  @R=0.3cm  @C=0.6cm {
(2)	&\bullet\ar[dr]&&\\	\bullet\ar[dr]\ar[ru]\ar@{<.}[rr]&&\bullet&\\
	&\bullet\ar[ru]&&}  \xymatrix  @R=0.3cm  @C=0.6cm {
(3)	&\bullet\ar[dr]&&\\		\bullet\ar[dr]\ar[ru]\ar[r]&\bullet\ar[r]&\bullet&\\
	&\bullet\ar[ru]&&}. \]
Since $X_1$ is a boundary vertex and $X_n$ is the starting vertex of the longest sectional path, the meshes containing  $X_1, \tau_{\CS}X_1$ and $X_n, \tau_{\CS}X_n$ are of the form $(1)$. On the other hand, for each $ 2 \leqslant i \leqslant n-1$, the mesh $\Xi_i$ containing $X_i, \tau_{\CS}X_i$ is either of the form $(2)$ or form $(3)$. More precisely, when $\Delta$ has an underlying graph of one of the Dynkin types $\mathbb{A}, \mathbb{B}, \mathbb{C}, \mathbb{F}$ and $\mathbb{G}$, then all $\Xi_i$  are of the form  $(2)$, but for the remaining types there exists exactly one  $ 1< j< n$ such that the mesh $\Xi_j$ containing $X_j, \tau_{\CS}X_j$ is of the form  $(3)$. In this case,
let $Y_j$ be the middle vertex of the mesh $\Xi_j$ other than $X_{j+1}$ and $\tau_{\CS}X_{j-1},$  then the mesh containing $Y_j$ as the leftmost vertex must be   of the form $(1)$. These facts follow easily from the covering of valued translation quiver $\Z\Delta\rt \Z\Delta/G.$ By using these facts we observe that all the sectional paths ending at $X_1$ are as follows:
$$2\leqslant i \leqslant n \ \ \ \ u_i: X_i\st{a_i}\rt X_{i-1}\rt \cdots \st{a_2  }\rt X_1 ,$$
and
$$\ \ \   \ \ \ \ \  \ \ \ \  \ \  \ \ v_j: Y_j\st{b_j}\rt X_j\st{a_j}\rt  \cdots\st{a_2}\rt  X_1, $$
where $Y_j$, if it exists, is the middle term of the mesh $\Xi_j$ different from $X_{j+1}$ and $\tau_{\CS}X_{j-1}.$  Following this way, we conclude that, for every $m$, the sectional path ending at $\tau^m_{\CS}X_1$, is obtained by applying the $\tau^m_{\CS}$ on $u_i$ or $v_j$, if it  exists.

This fact, in view of Lemma \ref{Lemma-SY}, implies that any vertex of $\Ga^s_{\CS}(G)$ lies in the $\tau_{\CS}$-orbit of $X_i$, for $1 \leq i \leq n$, or $Y_j$, if such a $j$ exists. Therefore, only possible vertices lying as the leftmost or rightmost vertex of  a mesh with exactly one vertex in the middle are the vertices belonging to the $\tau_{\CS}$-orbits of  $X_1, X_{n}$ and $Y_j$.  Hence we get our result, because boundary vertices satisfying such a property on the middle vertices.
\end{proof}

In particular, Theorem \ref{Proposition 2.13} implies that the inverse image of every single subset of $\mathcal{C}$  under the map $\delta$, defined in Lemma \ref{Lemma-delta}, has  cardinality at most 3. In the next example, we see that a finite component $\Ga^s_{\CS}(G)$ may contain  three different $\tau_{\CS}$-orbits of boundary vertices. For that, we need the construction of $\CS_{\SG}(T_2(\La))$, the $\CG$-monomorphism category of $T_2(\La)$. Let us review briefly its construction.

\begin{remark} The objects of the homomorphism category $\CH(T_2(\La))$ are the commutative diagrams in $\mmod\La$
\begin{equation}\label{diagram5.2}	 \xymatrix{ B\ar[r]^{\alpha_2} & D\\
	A	\ar[u]^{f} \ar[r]^{\alpha_1} & C. \ar[u]_{g}}
\end{equation}		
 A  morphism from an object as above to another object

\[ \xymatrix{ B'\ar[r]^{\alpha'_2} & D'\\
	A'	\ar[u]^{f'} \ar[r]^{\alpha'_1} & C' \ar[u]_{g'}}\]
is a 4-tuple $(\phi_1, \phi_2, \phi_3, \phi_4)$ of morphisms in $\mmod \La$ satisfying the following commutative diagram

\[\xymatrix@C-1.pc@R-1pc{ & B' \ar[rr]^{\ \al'_2} \ar@{<-}[dl]_{\phi_2} \ar@{<-}[dd]^>>>>{f'} && D' \ar@{<-}[dl]^{\phi_4} \ar@{<-}[dd]^>>>>>{g'}\\
B\ar@{<-}[dd]^>>>>>>>>f \ar@{->}'[r][rr]^{   \al_2} && D\ar@{<-}[dd]^>>>>>>>>{g}  &\\
& A' \ar[rr]^{\alpha'_1 \ \ \ \ \ } \ar@{<-}[dl]_>>>>>{\phi_1}  && C'\ar@{<-}[dl]^{\phi_3} \\
A \ar@{->}[rr]^{\alpha_1} && C&
}\]

We will adhere to the convention that objects of $T_2(\La)$ will be depicted vertically and morphisms will be depicted horizontally. Now in view of the fact that $\Gprj T_{2}(\La)  \simeq \CS_{\SG}(\La)$, the objects of $\CS_{\SG}(T_2(\La))$ are all such commutative diagrams \ref{diagram5.2} with the property that objects $f$ and $g$ lie in $\CS_{\SG}(\La)$, the morphisms $\alpha_1$ and $\alpha_2$ are monomorphisms and the induced morphism $({\rm Cok}(\alpha_1) \st{h}\rt {\rm Cok}(\alpha_2))$ lies in $\CS_{\SG}(\La)$.

The objects of $\CS_{\SG}(T_2(\La))$ are certain commutative diagrams in $\mmod \La$ or certain morphisms in ${\CH}(T_2(\La))$.  For purpose of simplicity, we follow the second description.
\end{remark}
\begin{example}\label{Example 4.6}
The triangular matrix algebra $T_2(k[x]/(x^2))$ is a $k$-algebra given by the quiver
\[\xymatrix{\underset 1 {\bullet}
	\ar@(ul,ur)[]^{\alpha_1}\ar[r]^\beta &\underset 2 \bullet
	\ar@(ul,ur)[]^{\alpha_2} }\]
with relations
$\alpha_1^2, \ \alpha_2^2,  \ \alpha_1\beta -\beta\alpha_2$.
First, we draw the Auslander-Reiten quiver of $\CS(k[x]/(x^2))$. Remember that $\CS_{\SG}(k[x]/(x^2))\simeq \Gprj T_2(k[x]/(x^2))$.
\[
\xymatrix  @R=0.3cm  @C=0.6cm {
	&&&&&& &&\\	&&&&&(0\rt \La)\ar[dr]& &&	\\   &&&	&(0\rt S)\ar[dr]\ar[ru]\ar@{<.}[rr]\ar@{.}[l]&&
	(S\st{i}\rt \La)\ar@{.}[uu]\ar@{.}[dd]&&&\\  &&&
	(S\st{i}\rt \La)\ar[dr]\ar@{<.}[rr]\ar[ru]\ar@{.}[uuu]\ar@{.}[d]&&(S\st{1}\rt S)\ar[ru]\ar@{<.}[r]&&&&\\
	&&&	&(\La\st{1}\rt \La)\ar[ru]&&&&&}
\]
\noindent
where $\La=k[x]/(x^2)$ and  $S=k[x]/(x)$. To compute the above Auslander-Reiten quiver, we apply  the fact that the canonical short exact sequence  $0 \rt S\st{i}\rt \La\st{\pi}\rt S\rt 0$ is an almost split sequence in $\Gprj\La.$ The vertices with the same label are identified.

The above Auslander-Reiten quiver implies that the stable Auslander algebra $\Delta$ of $T_2(\La)$ is the following Nakayama algebra of Loewy length 2
\[\xymatrix  @R=0.3cm  @C=0.6cm {
	&(S\st{i}\rt \La)\ar[dr]&&\\		(S\st{1}\rt S)\ar[ru]&&(0\rt S),\ar[ll]&}\]
which is the path algebra of the above quiver modulo the square of the arrow ideal.

Set  $G:=( 0\rt  S)$, $H:=(S\st{1}\rt  S)$ and $K:=(S\st{i}\rt \La)$ as the objects of $\CS_{\SG}(\La)$.

The above Auslander-Reiten quiver, in view of the diagram,
\begin{equation} \label{diagramexample}
 	\xymatrix{0 \ar[r] & S \ar[r]^{i} \ar[d]^{i} & \La \ar[r]^{\pi} \ar[d]^{\left[\begin{smallmatrix} 1\\ 0\end{smallmatrix}\right]} & S  \ar[r] \ar[d]^{i} & 0 \\
		0 \ar[r] & \La \ar[r]_<<<<{\left[\begin{smallmatrix} 1\\ -i\pi \end{smallmatrix}\right]} & \La\oplus \La \ar[r]_{[i \pi~~1]} & \La  \ar[r]  & 0.}
		\end{equation}
allows us to compute the $\tau_{\CS}$-orbits in $\CS_{\SG}(T_2(\La))$:
$$\tau_{\CS}(0\rt  G)=( H\st{1}\rt  H), \ \ \ \tau^{2}_{\CS}(0\rt G)=( K\st{i_1}\rt P_K), \ \ \  \tau^{3}_{\CS}( 0\rt  G)=(0\rt  G); $$
$$\tau_{\CS}(0\rt  H)=( K\st{1}\rt  K), \ \ \ \tau^{2}_{\CS}( 0\rt H)=(G\st{i_2}\rt P_G), \ \ \  \tau^{3}_{\CS}( 0\rt H)=( 0\rt  H); $$
$$\tau_{\CS}( 0\rt  K)=(G\st{1}\rt G), \ \ \ \tau^{2}_{\CS}( 0\rt  K)=(H\st{i_3}\rt P_H), \ \ \  \tau^{3}_{\CS}( 0\rt  K)=(0\rt  K). $$
 Note that here $K=\Omega_{T_2(\La)}(K)$.

By these computations, we have three different $\tau_{\CS}$-orbits of the boundary vertices in $\CS_{\SG}(T_2(\La))$. Since $\Delta$ is of finite representation type, its  Auslander-Reiten quiver $\Ga_{\Delta}$ has only one component $\Ga_{\Delta}\subseteq \Ga_{\CS_{\SG}(T_2(\La))}$, as we discussed in Remark \ref{Example-remark}. Under this embedding we can identify the vertices of $\Ga_{\Delta}$ by the vertices of $\CS_{\SG}(T_2(\La))$. In particular, the projective vertices of $\Ga_{\Delta}$  are identified with the boundary vertices of type $(c)$ in $\Ga_{\CS_{\SG}(T_2(\La))}$.  The quiver $\Ga_{\Delta}$ is also contained in $\Ga^s_{\CS_{\SG}(T_2(\La))}$ as we remove only projective-injective vertices of $\Ga_{\CS_{\SG}(T_2(\La))}$. Then we observe that  $\Ga_{\Delta}$ contains the vertices $ ( K\st{i_1}\rt P_K), (G\st{i_2}\rt P_G)$ and $(H\st{i_3}\rt P_H)$. This implies that $\Ga^s_{\CS}(G)=\Ga^s_{\CS}(H)=\Ga^s_{\CS}(K)$.

We refer the reader to \cite[Example 6.5]{H} for more detailed information on the Auslander-Reiten quivers $\Ga_{\Delta}, \Ga_{\CS_{\SG}(T_2(\La))}$ and   the embedding $\Ga_{\Delta} \subseteq \Ga_{\CS_{\SG}(T_2(\La))}$. Under this embedding, the vertices of $\Gamma_{\Delta}$  correspond to the vertices of $\Ga_{\CS_{\SG}(T_2(\La))}$ not being of the  types $(a)$ or $(b)$, see Definition \ref{Definition-types}. However, for the convenience of the reader, we provide the relevant Auslander-Reiten quivers according to the notations used in the current paper. Set $P:=(0 \rt \La)$ and $Q:=(\La\st{1}\rt \La)$, where $\La=k[x]/(x^2)$.
\[\xymatrix@-3.0pc@R=10pt{ &&&& {\tiny{(0\rt P)}}\ar[dddr]& &{\tiny{(Q\st{1}\rt Q)}}\ar[dddr] & &
 		& & & & \\&&
 		&*+[F]{\tiny {(H\st{i_3}\rt Q)}}\ar[dr] & &
 		{\tiny{(G\st{1}\rt G)}}\ar[dr]\ar@{.>}[ll] & &
 		{\tiny{(0\rt K)}}\ar[dr]\ar@{.>}[ll]
 		& &
 	*+[F]{{(H\st{i_3}\rt Q)}}\ar@{.>}[ll]	& &
 		&	& \\
 		&& 	&& *+[F]{\tiny{(K\st{u}\rt (G\oplus Q))}}\ar[dr]\ar[ur]\ar[ddr] & &
 		*+[F]{\tiny{(G\st{v}\rt (P\oplus H))}}\ar[ddr]\ar[dr]\ar[ur]\ar@{.>}[ll] & &
 		*+[F]{{\tiny (H\st{w}\rt K)}}\ar[ddr]\ar[dr]\ar@{.>}[ll]\ar[ur]
 		& & & &
 		\\&&& {(0\rt G)}\ar[ur]\ar[uuur]\ar
 		&&
 		*+[F]{\tiny{(K\st{i_1}\rt (P\oplus Q))}}\ar[ur]\ar[uuur]\ar@{.>}[ll] & &
 		{\tiny{(H\st{1}\rt H)}}\ar[ur]\ar@{.>}[ll] & &{\tiny{(0\rt G)}}\ar@{.>}[ll]
 		& &
 		& \\&& & {(K\st{1}\rt K)}\ar[uur]\ar@{.}[d]\ar@{.}[uuuu]
 		& & {\tiny{(0\rt H)}}\ar[uur]\ar@{.>}[ll]&
 		&*+[F]{{\tiny{(G\st{i_2}\rt P)}}}\ar[uur]\ar[dr]\ar@{.>}[ll] &
 		&{\tiny{(K\st{1}\rt K)}}\ar@{.>}[ll]\ar@{.}[d]\ar@{.}[uuuu] &
 		& &
 		\\&& &
 		& &&		
 		& & {\tiny{(P\st{1}\rt P)}}\ar[ur]  & &		
 		& & }\] 		
The vertices of $\Delta$ are displayed by the solid frames. One can see easily that the projective covers $P_G$ and $P_H$ are $P$ and $Q$, respectively. Moreover, the commutative diagram \ref{diagramexample} implies that $P_K=P\oplus Q$.
\end{example}

\section{Three-periodicity phenomenon}\label{Sec 6}
In this section, we investigate an interesting application of Proposition \ref{Proposition 2.5}, which provides some information concerning the cardinality of the finite components of the stable Auslander-Reiten quiver $\Ga^s_{\CS}$ containing a boundary vertex. We are grateful to the referee for drawing our attention to this application.

In this section, let $\La$ be a finite dimensional algebra over an algebraically closed field. Let $\Ga$ be a component of $\Ga^s_{\CS}$ containing a boundary vertex and with a finite cardinality. It follows from  Proposition \ref{Proposition 2.7} that  $\Ga=\Z\Delta/G$, where $\Delta$ is a Dynkin quiver and $G$ is an admissible automorphism group of $\Z\Delta$ containing a positive power of the translation. Assume that $\Ga^s_{\CS}$ is a finite quiver.  Since $\Ga^s_{\CS}$ is the Auslander-Reiten quiver of the triangulated category $\underline {\CS_{\SG}(\La)}$, we obtain by \cite{Am} or \cite{XZ} that the group $G$ is a  weakly
admissible automorphism group,  which is  isomorphic to $\Z$. Riedtmann \cite[Section 4.3]{Rie} described all possible generators. One can find  a complete list of all possible generators in \cite[Theorem 2.2.1]{Am}.

Let us fix a numbering and an orientation of the simply-laced Dynkin trees.

\begin{displaymath}
\xymatrix{\mathbb{A}_n:\quad 1\ar[r] & 2\ar[r] & \cdots \ar[r]& n-1\ar[r] & n}
\end{displaymath}
$$\xymatrix{& & &  n-1\ar[dl]\\ \mathbb{D}_n:\quad 1\ar[r] & 2\ar[r] \cdots\ar[r] & n-2 & \\ & & & n\ar[ul]}$$
$$\xymatrix{ & & 4 & & & \\ \mathbb{E}_n:\quad 1 & 2\ar[l] &
  3\ar[l]\ar[r]\ar[u] & 5\ar[r] & \cdots \ar[r] & n}$$

The quiver $\Z\Delta$ with the translation $\tau(n,x)=(n-1,x)$ is clearly a stable translation quiver which does not depend (up to isomorphism) on the
orientation of $\Delta$, see \cite{Rie}.
Let $\Delta=\mathbb{A}_n$, and define an automorphism $S$ of $\Z\Delta$ by sending $(p, q)$ to $(p+q, n+1-q)$.

\begin{theorem}\label{groupesadmissibles}(\cite[Theorem 2.2.1]{Am})
Let $\Delta$ be a Dynkin tree quiver and $G$ be a non-trivial group of weakly admissible automorphisms of $\Z\Delta$. Then $G$ is isomorphic to $\Z$ and here is a list of its possible generators:
\begin{itemize}
\item
if $\Delta=\mathbb{A}_n$ with $n$ odd, then possible generators are
$\tau^r$ and $\phi\tau^r$ with $r\geq 1$, where $\phi=\tau^\frac{n+1}{2} {S}$
is an automorphism of $\Z\Delta$ of order 2.
\item
if $\Delta=\mathbb{A}_n$ with $n$ even, then possible generators are $\rho^r$, where $r\geq 1$ and
where $\rho=\tau^{\frac{n}{2}} {S}$. Since $\rho^2=\tau^{-1}$, $\tau^r$ is a possible generator.
\item
if $\Delta=\mathbb{D}_n$ with $n\geq 5$, then possible generators are
$\tau^r$ and $\tau^r\phi$, where $r\geq 1$ and
where $\phi= (n-1,n)$ is the automorphism of $\mathbb{D}_n$ exchanging $n$
and $n-1$.
\item
if $\Delta=\mathbb{D}_4$, then possible generators are
$\phi\tau^r$, where $r\geq 1$ and where $\phi$ belongs to
$\mathfrak{S}_3$ the permutation group on 3 elements seen as a subgroup of
automorphisms of $\mathbb{D}_4$.
\item
if $\Delta=\mathbb{E}_6$, then possible generators are
$\tau^r$ and $\phi\tau^r$, where $r\geq 1$ and where $\phi$ is the
automorphism of $\mathbb{E}_6$ exchanging $2$ and $5$, and $1$ and $6$.
\item
if $\Delta=\mathbb{E}_n$ with $n=7,8$, then possible generators are $\tau^r$, where $r\geq 1$.
\end{itemize}

\end{theorem}

Now we are ready to prove our main theorem in this section.

\begin{theorem}\label{Proposition 4.8}
Let   $\La$ be a finite dimensional algebra over an algebraically closed field and  let $\CS_{\SG}(\La)$ be of finite representation type. Let $\Delta$ be a Dynkin diagram different from $\mathbb{D}_4$ and let $G$ be a non-trivial group of weakly admissible automorphisms of $\Z\Delta$. If $\Gamma=\mathbb{Z}\Delta/G$ is a component of the stable Auslander-Reiten quiver $\Gamma^s_{\CS}$,  then 3 is a divisor of the cardinality of $\Ga$.
\end{theorem}

\begin{proof}
Assume that  $X$ is a non-projective indecomposable Gorenstein-projective module. Then Proposition \ref{Proposition 2.5} implies that three applications of the Auslander-Reiten translation to the object ${\bf X}=(0 \rt X)$ yield again an object of the same type. Hence the cardinality $d$ of the $\tau_{\CS}$-orbit of ${\bf X}$ is a divisor of 3. In particular, ${\bf X }$ is of $\tau_{\CS}$-period $d$. Let us fix the following notation throughout the proof.
\begin{itemize}
\item Let $\alpha$ denote the possible generator of $G$.
\item For every $i \in \Z$ and a vertex $a$ in $\Delta$,  we let $\widetilde{(i, a)}$ denote  the associated  $\alpha$-orbit of the vertex $(i, a)$ in $\Z\Delta$, that is, the set of all $\alpha^j(i, a)$ where $j$ runs through $\Z$ .
\item Let ${\bf X}$ be a vertex of $\Ga=\Z\Delta/G$ that corresponds to the orbit of a vertex $(0, x)$ in $\Z\Delta$, where $x$ is a vertex in $\Delta.$ Namely, ${\bf X}= \widetilde{(0, x)}$.
\item Let $U(v)$ denote the $\tau_{\CS}$-orbit of a vertex $v$ in $\Ga^s_{\CS}$.
\end{itemize}

We check all possible cases based on Theorem \ref{groupesadmissibles} and show that for each case    $3$ is a divisor of $\mid \Gamma \mid$.

{(1)} $\Delta=\mathbb{E}_n$ with $n=7, 8.$  By Theorem \ref{groupesadmissibles}, we have $\alpha=\tau^r$ for some $r\geq 1.$ According to the existence  of the boundary vertex ${\bf X}$ with $\tau^d_{\CS} \widetilde{(0, x)}=\widetilde{(0, x)}$ in $\Ga$,  we deduce that $(-d, x)$ lies in the $\tau^r$-orbit of $(0, x)$. Hence $(-d, x)=(rs, x)$ for some $s \in \Z$, and so $r$ divides $d$. On the other hand, since $\tau^r(0, x)=(-r, x)$ in $\Z\Delta$ and $(0, x)$ and $(-r, x)$ have the same $\tau^r$-orbits, we  have $\tau^r_{\CS} \widetilde{(0, x)}=\widetilde{(0, x)}$. This implies $d$ divides $r$. As both $r$ and $d$ are positive we get $r=d$. Now    $\Ga=\Z\mathbb{E}_n/<\tau^d>$, and it   implies that $\mid \Ga\mid=nd$.

{(2)} $\Delta=\mathbb{E}_6.$ If  $\alpha=\tau^r$ for some $r\geq 1$, then the same argument as the case $(1)$ works. Assume   $\alpha=\phi \tau^r$ for some $r \geq 1.$ Since ${\bf X}$ is a boundary vertex,  having exactly one immediate predecessor, equivalently exactly one immediate successor, $x$ is not possible one of  the vertices  2, 3, 5.  Assume $x$ is either $1$ or $6$.  Then the $\phi\tau^r$-orbit of $(0, x)$ consists of all pairs $(rs, 1), (rs, 6)$, where $s \in \Z$. Since $\tau_{\CS}^d\widetilde{(0, x)}=\widetilde{(0, x)}$, we observe that $(-d, x)=(rs, 1)$ or $(rs, 6)$ for some $s \in \Z$. So, for each of these cases we see that $r$ divides $d$. By the definition of $\phi$,  we see that the vertices $(j, 3)$ and $(j, 4)$, for any $j \in \Z$,  are fixed under the automorphism $\phi$. So the $\phi\tau^r$-orbits of $(0, 3)$ and $(0, 4)$ are the same as the $\tau^r$-orbits of $(0, 3)$ and $(0, 4)$, respectively.  By using this fact and applying  the same argument as in the case $(1)$ for the vertices $(0, 3)$ and $(0, 4)$, we obtain the equalities $\tau_{\CS}^r\widetilde{(0, 3)}=\widetilde{(0,3)}$ and $\tau_{\CS}^r\widetilde{(0, 4)}=\widetilde{(0,4)}$. Indeed, $r$ is the least number holding those conditions. As $\tau_{\CS} $ induces an automorphism on $\Ga$, hence those qualities force to have the equality $\tau^r_{\CS}{\bf X}={\bf X}$. It follows that $d$ divides $r$. Consequently, $r=d$, and so  $\Ga=\Z\mathbb{E}_6/<\phi\tau^d>$. This description of $\Ga$ yields  that $U(\widetilde{(0, 1)})=U(\widetilde{(0, 6)})$ and $U(\widetilde{(0, 2)})=U(\widetilde{(0, 5)})$ and
\begin{align*}
\mid\Ga\mid&=\mid U(\widetilde{(0, 1)})\mid+\mid U(\widetilde{(0,3)}) \mid +\mid U(\widetilde{(0, 4)})\mid +\mid U(\widetilde{(0, 2)})\mid\\
&= d+d+d+d \\
&= 4d.
\end{align*}

${(3)} \    \Delta=\mathbb{D}_n$ where $n\neq 4$. This case is proved in the sprit of the two cases we have already investigated. However, for the convenience of the reader, we repeat the main points of this case.

If  $\alpha=\tau^r$ for some $r\geq 1$, then again  the same argument as the case $(1)$ works. Assume that $\alpha=\tau^r\phi$ for some $r\geq 1$. If the vertex $x$ is neither $n$  nor $n-1$, then  the $\tau^r\phi$-orbit of $(0, x)$ is  the same as the $\tau^r$-orbit of $(0, x)$, as $\phi$ keeps the vertex $x$. Hence the same proof as in the case $(1)$ proves that $r=d$.  Assume that  $x$ is either $n$ or $n-1$. As we did in the case (2) for the vertices 1 and 6  therein, we have again the equality  $r$  and $d$. Finally, $\Ga=\Z\mathbb{D}_n/<\tau^d\phi>$. This implies that $U(\widetilde{(0, n)})=U(\widetilde{(0, n-1)})$ and
\[\mid\Ga\mid= \Sigma^{n-1}_{j=1} \mid U(\widetilde{(0, j)})\mid= (n-1)d. \]

{(4)} $\Delta=\mathbb{A}_n$ with $n$ even. Let $\alpha=\rho^r$ for some $r\geq 1$.

Keeping in mind that $\rho^2=\tau^{-1}$,  we get the vertices $(0, x)$ and $\rho^{2r}(0, x)=\tau^{-r}(0, x)$ are clearly in the same $\rho^r$-orbit. Thus $\tau^r_{\CS}\widetilde{(0, x)}=\widetilde{(0, x)}$. This implies that  $d$ divides $r$. As $\Ga=\Z\mathbb{A}_{n}/<\rho^r>$,  we observe that $U(\widetilde{(0, j)})=U(\widetilde{(0, n+1-j)})$ for every $1\leq j\leq n$, and
$$\mid\Ga \mid=\Sigma^{\frac{n}{2}}_{j=1}\mid U(\widetilde{(0, j)}) \mid=\frac{n}{2}r.$$
Hence 3 divides  $\mid \Ga \mid$, as required.

{(5)} $\Delta=\mathbb{A}_n$ with $n$ odd. We only consider the possibility $\alpha=\phi\tau^r$ for some $r \geq 1$, as the other possibility is proved in the same way as the case $(1)$.

Since $\phi^2$ is an automorphism of order 2, and moreover, the commutativity of $\tau$ and $S$, we infer that $(\phi\tau^r)^2(0, x)=\tau^{2r}(0, x).$ Hence $(0, x)$ and $\tau^{2r}(0, x)$ lie in the same $\phi\tau^2$-orbit. This leads that $\tau^{2r}_{\CS} {\bf X}=\tau^{2r}_{\CS}\widetilde{(0, x)}=\widetilde{(0, x)}={\bf X}$. Hence $d$ divides $2r$, and consequently $3$ divides $r$. As $\Ga=\Z\mathbb{A}_{n}/<\phi\tau^r>$,  we observe that $U(\widetilde{(0, j)})=U(\widetilde{(0, n+1-j)})$ for every $1\leq j\leq n$, and
$$\mid\Ga \mid=\Sigma^{\frac{n+1}{2}}_{j=1}\mid U(\widetilde{(0, j)}) \mid=\frac{n+1}{2}r.$$
This completes the proof.
\end{proof}

Note that the above proposition does not hold for type $\mathbb{D}_4$. For $\La=k[x]/(x^3)$, the stable Auslander-Reiten quiver $\Ga^s_{\CS}$ has only one component of type $\mathbb{D}_4$ with the cardinality 8, see e.g. \cite[Section 6]{RS2}.

\hspace{1 mm}



\section*{Acknowledgments}
The authors would like to thank the referees,  whose many useful comments significantly improved the paper.  This research is supported by the National Natural Science Foundation of China (Grant No. 12101316).

\end{document}